\newtheorem{theorem}{Theorem}[section] 
\newtheorem{lemma}[theorem]{Lemma}
\newtheorem{corollary}[theorem]{Corollary}
\newtheorem{proposition}[theorem]{Proposition}
\theoremstyle{definition}
\theoremstyle{remark}
\newtheorem{remark}{Remark}
\renewcommand{\o}{\mathfrak{o}}
\newcommand{\e}{\epsilon}
\newcommand{\p}{\mathfrak{p}}
\newcommand{\w}{\varpi}
\renewcommand{\d}{\delta}
\newcommand{\C}{\mathbb{C}}
\begin{document}

\title[Test vectors for local periods]
{Test vectors for local periods}
\author{U. K. Anandavardhanan and Nadir Matringe}

\address{Department of Mathematics, Indian Institute of Technology Bombay, Mumbai - 400076, India.}
\email{anand@math.iitb.ac.in}

\address{Laboratoire Math\'ematiques et Applications, Universit\'e de Poitiers, France.}
\email{matringe@math.univ-poitiers.fr}

\subjclass{Primary 22E50; Secondary 11F33, 11F70, 11F85}

\date{}

\begin{abstract}
Let $E/F$ be a quadratic extension of non-archimedean local fields of characteristic zero. An irreducible admissible representation $\pi$ of $GL(n,E)$ is said to be distinguished with respect to $GL(n,F)$ if it admits a non-trivial linear form that is invariant under the action of $GL(n,F)$. It is known that there is exactly one such invariant linear form up to multiplication by scalars, and an explicit linear form is given by integrating Whittaker functions over the $F$-points of the mirabolic subgroup when $\pi$ is unitary and generic. In this paper, we prove that the essential vector of \cite{jps81} is a test vector for this standard distinguishing linear form and that the value of this form at the essential vector is a local $L$-value. As an application we determine the value of a certain proportionality constant between two explicit distinguishing linear forms. We then extend all our results to the non-unitary generic case.
\end{abstract}

\maketitle

\section{Introduction}

Let $F$ be a non-archimedean local field of characteristic zero. Let $G$ be the $F$-points of a reductive algebraic group over $F$ and let $H$ be the $F$-points of a reductive subgroup of $G$ over $F$. An irreducible representation, say $\pi$, of $G$ is said to be \textit{$\chi$-distinguished} with respect to $H$, where $\chi:H \rightarrow \mathbb C^\times$ is a character of $H$, if it admits a non-trivial $(H,\chi)$-equivariant linear form, which is to say
\[{\rm Hom}_H(\pi,\chi) \neq 0.\] The case when $G$ is quasi-split, $H$ is a maximal unipotent subgroup and $\chi$ is a non-degenerate character of $H$, in which situation the above space is known to be at most one dimensional, gives rise to the \textit{Whittaker model}, an object that is all pervasive and prominent in local harmonic analysis.
 
The representation $\pi$ of $G$ is said to be \textit{distinguished} with respect to $H$ if it is $1$-distinguished with respect to $H$, where $1$ is the trivial representation of $H$. 

Distinguished representations arise naturally in various situations of wide interest. For instance, they play an important role in the harmonic analysis of $G/H$. They are also of much number theoretic interest since the local component, at a given place $v$, of an irreducible cuspidal representation of an adelic group, say $G(\mathbb A_k)$, with a non-vanishing \textit{H-period integral} is always distinguished for the pair $(G(k_v),H(k_v))$. 

Of specific interest is the case of a symmetric pair $(G,H)$, where $H$ is the subgroup of fixed points of an involution on $G$, and for such a symmetric space, the philosophy of Jacquet relates distinguished representations to Langlands functoriality. The most investigated and perhaps the best understood symmetric pair is $(GL_n(E),GL_n(F))$, where $E/F$ is a quadratic extension of non-archimedean local fields of characteristic zero. From the early works of Flicker, one knows that the symmetric space $GL_n(E)/GL_n(F)$ is a \textit{Gelfand pair}, i.e., it has the \textit{multiplicity one} property, and the question of distinction for this symmetric space is related to the study of the local Asai $L$-function, denoted by $L(s,\pi,As)$, as well as that it is connected to the \textit{base change lift} from the quasi-split unitary group in $n$ variables defined with respect to $E/F$ \cite{fli88,fli91,fli93}. Distinction for $(GL_n(E),GL_n(F))$ was also known to be intimately connected to other arithmetical invariants such as the \textit{epsilon factor for pairs} of Jacquet, Piatetski-Shapiro and Shalika \cite{jps83}, and the early works in this direction are due to Hakim and Ok \cite{hak91,ok97}. There has been a lot of progress along all these lines ever since and here we only refer to a sample of the concerned works \cite{akt04,kab04,ar05,ana08,mat09b,mat10,mat09a,off11,gur15,ho15,mo16}.

One particularly satisfying feature for distinction for the pair $(GL_n(E),GL_n(F))$ is that not only that 
\[\dim {\rm Hom}_{GL_n(F)}(\pi,1) \leq 1\]
for any irreducible admissible representation $\pi$ of $GL_n(E)$, but also that when the above dimension equals one, i.e., when $\pi$ is distinguished with respect to $GL_n(F)$, and when $\pi$ is a unitary representation which admits a Whittaker model, an explicit $GL_n(F)$-invariant linear form on $\pi$ can be realized on the Whittaker model of $\pi$. To this end, suppose 
$\psi:E \rightarrow \mathbb C^\times$ is a non-trivial additive character of $E$ that is trivial on $F$ and consider the $\psi$-Whittaker model $W(\pi,\psi)$ of $\pi$. Then the unique, up to multiplication by scalars, $GL_n(F)$-invariant linear form on $\pi$ can be written down on $W(\pi,\psi)$ as
\[\ell(W)=\int_{N_n(F)\backslash P_n(F)} W(p)dp= \int_{N_{n-1}(F)\backslash GL_{n-1}(F)} W(h)dh,\]
where $P_r$ denotes the mirabolic subgroup of $GL_r$ consisting of $r\times r$ invertible matrices whose last row equals $(0,\ldots,0,1)$, and $N_r$ is the maximal unipotent subgroup of $GL_r$ consisting of $r\times r$ unipotent upper triangular matrices  \cite{fli88,ok97,akt04}.  We remark here that though the integral defining the distinguishing linear form $\ell$ above is known to be convergent only under the assumption that the representation $\pi$ is unitary, even in the non-unitary case we could make sense of the above linear form via analytic continuation to $s=1$ of the linear form
\[\ell_s(W)=\int_{N_n(F)\backslash P_n(F)} W(p)|\det p|_F^{s-1}dp= \int_{N_{n-1}(F)\backslash GL_{n-1}(F)} W(h)|\det h|_F^{s-1}dh.\] 
For further details, we refer to Section  \ref{sec-nonunitary}. This is the \textit{local period} in the title of this paper. We summarize a few of the key properties of distinction for the pair $(GL_n(E),GL_n(F))$ in Section  \ref{sec-dist}.

There is an equally satisfying feature for $\psi$-distinction for the pair $(GL_n(E),N_n(E))$, where a non-trivial additive character $\psi:E \rightarrow \mathbb C^\times$, of conductor zero, is seen as a character of $N_n(E)$ by defining
\[\psi(n) = \psi(\sum_{i=1}^{n-1} n_{i,i+1}),\]
where $n=(n_{i,j}) \in N_n(E)$. Like before, not only do we have \cite{gk75,js83}
\[\dim {\rm Hom}_{N_n(E)}(\pi,\psi) \leq 1\]
for any irreducible admissible representation $\pi$ of $GL_n(E)$, but also when the above dimension equals one, i.e., when $\pi$ admits a $\psi$-Whittaker model, the unique, up to multiplication by scalars, $(N_n(E),\psi)$-invariant linear form, say $\Lambda$, on $\pi$ can be further explicated in the sense that an explicit vector on which $\Lambda$ is non-zero, satisfying several nice properties, can be realized in the Whittaker model of $\pi$. This is the \textit{essential vector} of an irreducible admissible generic representation $\pi$ of $GL_n(E)$ \cite{jps81}. We denote it by $W_\pi^0$ in the sequel. We extract a few important properties of the essential vector in \S \ref{subsec-essential}, supplementing with more details wherever necessary. 

It may be relevant at this point to mention the recent work of Lapid and Mao on \textit{model transitions}. In \cite{lm15b}, they initiate a study of model transitions in a very general context. In our context, when we consider the problem of distinction for the triples $(GL_n(E),GL_n(F),1)$ and $(GL_n(E),N_n(E),\psi)$, an irreducible admissible generic representation $\pi$ of $GL_n(E)$ which is distinguished with respect to $GL_n(F)$ affords two models; the Whittaker model given by
\[W_v(g) = \Lambda(\pi(g)v),\] and
\[f_v(g) \mapsto \ell(\pi(g)v),\]
for $v \in \pi$. Up to multiplication by scalars, there is a unique isomorphism between these two models, and by definition a model transition is a $GL_n(E)$-equivariant integral transform from one model to the other. The import of explicitly realizing the $GL_n(F)$-invariant functional $\ell$ on $W(\pi,\psi)$ is that 
\[W \mapsto \int_{N_n(F)\backslash P_n(F)} W(pg)dp\] 
is a model transition. We mention in passing that for a ``relatively cuspidal" (with respect to $GL_n(F)$) representation $\pi$ of $GL_n(E)$, an inverse model transition is given by \cite[Corollary 5.1]{off11}
\[f \mapsto \int_{N_n(F) \backslash N_n(E)} f(ng)\psi^{-1}(n)dn.\]

The main goal of the present paper is to highlight one more aspect of the interplay between the two models afforded by an irreducible admissible generic representation $\pi$ of $GL_n(E)$ which is distinguished with respect to $GL_n(F)$. Our first theorem asserts that the essential vector in the $\psi$-Whittaker model of $\pi$ is a test vector for the local period $\ell$. In fact, the value of the local period at the essential vector is an unramified Asai $L$-value. 

In the following theorem, $\pi_u$ denotes the unramified standard module attached in \cite{mat13} to an irreducible admissible generic representation $\pi$ of $GL_n(E)$ that  is ramified (cf. Theorem \ref{essentialformula}). 

\begin{theorem}\label{1}
Let $\pi$ be an irreducible admissible generic representation of $GL_n(E)$ which is distinguished with respect to $GL_n(F)$. Let $\psi: E\rightarrow \mathbb C^\times$ be an additive character of $E$ that is trivial on $F$ of conductor zero. Let $W_\pi^0 \in W(\pi,\psi)$ be the essential vector of $\pi$. Then, 
\[\ell(W_\pi^0)=\int_{N_n(F)\backslash P_n(F)} W_\pi^0(p)dp \neq 0.\] 
Furthermore,
\[\ell(W_\pi^0) =
\begin{cases}
L(1,\pi_u,As) &\text{if $\pi$ is ramified,} \\
\frac{L(1,\pi,As)}{L(n,1_{F^\times})} &\text{if $\pi$ is unramified}. 
\end{cases}
\]
\end{theorem}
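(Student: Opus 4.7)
The plan is to reduce the local period to an explicit sum over the $F$-rational diagonal torus and identify it with the claimed Asai $L$-value using the torus formula of \cite{mat13}.

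First, I would switch to the $GL_{n-1}$ form of the period and introduce the parameter $s$,
\[\ell_s(W_\pi^0)=\int_{N_{n-1}(F)\backslash GL_{n-1}(F)}W_\pi^0\!\begin{pmatrix} h & 0 \\ 0 & 1 \end{pmatrix}|\det h|_F^{s-1}\, dh,\]
computing it as a rational function of $q_F^{-s}$ and specializing to $s=1$ at the end; this also handles the non-unitary setting by analytic continuation. Since $\psi$ is trivial on $F$, the integrand is genuinely left $N_{n-1}(F)$-invariant. The essential vector is right-invariant under the congruence subgroup $K_1(\mathfrak{p}_E^{c(\pi)})$ of $GL_n(E)$, and the block embedding $\mathrm{diag}(GL_{n-1}(\mathfrak{o}_F),1)$ lies inside this subgroup because every such matrix has last row $(0,\dots,0,1)$. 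Applying the Iwasawa decomposition $GL_{n-1}(F)=N_{n-1}(F)A_{n-1}(F)GL_{n-1}(\mathfrak{o}_F)$ collapses the compact factor and gives
\[\ell_s(W_\pi^0)=\mathrm{vol}(GL_{n-1}(\mathfrak{o}_F))\int_{A_{n-1}(F)}W_\pi^0\!\begin{pmatrix} a & 0 \\ 0 & 1 \end{pmatrix}\delta_{B_{n-1}}(a)^{-1}|\det a|_F^{s-1}\, da.\]

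Parametrizing $a=\mathrm{diag}(\varpi_F^{m_1},\dots,\varpi_F^{m_{n-1}})$ turns this into a formal series in $q_F^{-s}$ whose coefficients are the values of $W_\pi^0$ at $F$-rational diagonal matrices of $GL_n(E)$. In the unramified case, $W_\pi^0$ is the normalized spherical Whittaker function, the Casselman--Shalika formula expresses these coefficients in the Satake parameters, and Flicker's classical computation evaluates the sum to $L(s,\pi,As)/L(ns,1_{F^\times})$. In the ramified case I would invoke Theorem \ref{essentialformula} of \cite{mat13}, which expresses $W_\pi^0$ on the full $E$-torus through the character of the unramified standard module $\pi_u$, supported in a dominant cone. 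Substituting $F$-rational uniformizers and summing produces a Cauchy-type expression that, after reorganization, is exactly the Euler product for $L(s,\pi_u,As)$, with no residual $L(ns,1_{F^\times})$ factor --- essentially because the essential-vector normalization absorbs the central direction responsible for the denominator in the spherical case. Specializing at $s=1$ yields both displayed formulas, and non-vanishing is automatic since an unramified Asai factor at $s=1$ is a finite nonzero product of terms of the form $(1-x)^{-1}$.

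The main technical obstacle is the ramified case: verifying that the sum produced by \cite{mat13}'s torus formula, combined with the modulus character $\delta_{B_{n-1}}$ and the measure normalizations, reassembles exactly into the Euler product defining $L(s,\pi_u,As)$. The delicate bookkeeping involves the passage between $E$- and $F$-valuation conventions (so that $q_F^{-s}$ appears rather than $q_E^{-s}$) and a conceptual explanation for why the factor $L(ns,1_{F^\times})$ drops out of the ramified answer but persists in the unramified one. The unramified case itself is essentially a known computation of Flicker once the reduction to the $F$-torus is in place.
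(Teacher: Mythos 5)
Your overall strategy is right: switch to the $s$-deformed period, use the Iwasawa decomposition, invoke the torus formula of \cite{mat13} in the ramified case, and reduce the unramified case to Flicker's spherical computation. But in the ramified case you leave the decisive step as an acknowledged ``main technical obstacle,'' and it is precisely here that you miss the observation the paper uses to close the argument.

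After plugging in Theorem~\ref{essentialformula} and the Iwasawa decomposition, one does \emph{not} need to reassemble anything into an Euler product. The indicator $1_{\o_E}(a_r)$ coming from the torus formula (note: $\o_E$, not $\o_E^\times$) together with the $\nu_E(a')^{(n-r)/2}$ and $\delta_{B_{n-1}}^{-1}$ factors collapses the $A_{n-1}(F)$-integral into
\[\int_{A_r(F)}W_{\pi_u}^0(a')\,\delta_{B_r(F)}^{-1}(a')\,\nu_F(a')\,1_{\o_F}(a_r)\,da'=I\bigl(1,W_{\pi_u}^0,1_{\o_F^r}\bigr),\]
which is the \emph{full} Asai integral for the unramified standard module $\pi_u$ of $G_r(E)$ (the function $\Phi=1_{\o_F^r}$ is exactly what $1_{\o_F}(a_r)$ supplies, since $\e_r a'=(0,\dots,0,a_r)$). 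One then quotes Flicker's spherical computation, $I(s,W_{\pi_u}^0,1_{\o_F^r})=L(s,\pi_u,As)$ --- extended, as the paper notes is necessary, from irreducible unramified generics to arbitrary unramified standard modules via Shintani's formula. This also explains cleanly why no $L(ns,1_{F^\times})$ appears in the ramified answer: in that case the torus formula naturally hands you an $I$-type integral for $\pi_u$, whereas in the unramified case you are evaluating an $I_{(0)}$-type integral for $\pi$, and the factor $L(ns,1_{F^\times})$ is exactly the discrepancy $I(s,W_\pi^0,\Phi_0)=I_{(0)}(s,W_\pi^0)L(ns,1_{F^\times})$ obtained by integrating out $\Phi_0$ via Iwasawa and Tate. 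Your heuristic (``the essential-vector normalization absorbs the central direction'') is too vague to substitute for this mechanism, and your plan to identify a Cauchy-type sum with the Euler product for $L(s,\pi_u,As)$ is strictly harder than necessary and left unexecuted.

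Two smaller points. In the unramified case, Flicker's Proposition~3 computes $I(s,W_\pi^0,\Phi_0)=L(s,\pi,As)$, not $I_{(0)}$ directly; the passage to $I_{(0)}$ is the separate factorization step above, which you should state explicitly rather than folding it into ``Flicker's classical computation.'' And the nonvanishing at $s=1$ is not merely ``an unramified Asai factor at $s=1$ is a finite nonzero product'': you need that $L(s,\pi_u,As)$ (resp.\ $L(s,\pi,As)/L(ns,1_{F^\times})$) has no pole at $s=1$. In the unitary case this follows from convergence; in the general distinguished case it requires Proposition~\ref{regular}, which you should cite rather than take as obvious.
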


\begin{remark}
The statement of Theorem \ref{1} depends on the choice of Haar measures. Throughout this paper, on all the groups considered, the chosen Haar measure is such that their respective maximal compact subgroups have volume one. More precise details are given in Section  \ref{prelim} and at the start of the proof of Theorem \ref{ramified-computation}.
\end{remark}

\begin{remark}
In Theorem \ref{1}, if $\pi$ is assumed to be unitary as well, we do not need to assume that $\pi$ is distinguished with respect to $GL_n(F)$ to compute $\ell(W_\pi^0)$ (cf. Theorem \ref{ramified-computation} and Theorem \ref{unramified-computation}). However, in the non-unitary generic case we do need this assumption (cf. Section  \ref{sec-nonunitary}).
\end{remark}

Another linear form on the representation $\pi$ that is closely related to $\ell$, once again given on the $\psi$-Whittaker model, is
\[\ell^\prime(W)=\int_{N_n(F)\backslash P_n(F)} W(w~ {^t}p^{-1})dp,\]
where $w$ is the longest Weyl element of $GL_n(E)$ with $1$'s on the anti-diagonal and $0$'s elsewhere. It is known that if $\pi$ is an irreducible admissible generic representation of $GL_n(E)$ which is distinguished with respect to $GL_n(F)$ then there exists a non-zero constant $c(\pi) \in \mathbb C\backslash \{0\}$, independent of $\psi$, such that $\ell^\prime=c(\pi)\ell$ (cf. Proposition \ref{constant-pi}). Our next result computes $c(\pi)$ to be $1$, as an application of Theorem \ref{1}.

\begin{theorem}\label{2}
Let $\pi$ be an irreducible admissible generic representation of $GL_n(E)$ which is distinguished with respect to $GL_n(F)$. Let $\psi: E\rightarrow \mathbb C^\times$ be any additive character of $E$ that is trivial on $F$. Then, for $W \in W(\pi,\psi)$,
\[\int_{N_n(F)\backslash P_n(F)} W(p)dp = \int_{N_n(F)\backslash P_n(F)} W(w~ {^t}p^{-1})dp.\]
\end{theorem}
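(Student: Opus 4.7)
The plan is to apply Proposition~\ref{constant-pi} to reduce to showing $c(\pi) = 1$. Since Theorem~\ref{1} gives $\ell(W_\pi^0) \neq 0$, we have $c(\pi) = \ell^\prime(W_\pi^0)/\ell(W_\pi^0)$, so it suffices to prove $\ell^\prime(W_\pi^0) = \ell(W_\pi^0)$.

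The main tool is the map $\phi\colon W(\pi,\psi)\to W(\tilde\pi,\psi^{-1})$, where $\tilde\pi$ is the contragredient of $\pi$, defined by $\phi(W)(g) := W(w\,{}^tg^{-1})$. A direct calculation using $w\,{}^tn^{-1}w^{-1}\in N_n(E)$ together with $\psi(w\,{}^tn^{-1}w^{-1}) = \psi^{-1}(n)$ for $n\in N_n(E)$ shows $\phi(W)\in W(\tilde\pi,\psi^{-1})$, and $w^2 = I$ forces $\phi^2 = \mathrm{Id}$. Writing $\ell^{\tilde\pi}$ for the analogue of $\ell$ on $W(\tilde\pi,\psi^{-1})$, the definition of $\ell^\prime$ gives $\ell^\prime(W) = \ell^{\tilde\pi}(\phi(W))$, so in particular $\ell^\prime(W_\pi^0) = \ell^{\tilde\pi}(\phi(W_\pi^0))$.

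Next I would establish the self-duality of the essential vector under $\phi$, namely $\phi(W_\pi^0) = W_{\tilde\pi}^0$. Granting this, $\ell^\prime(W_\pi^0) = \ell^{\tilde\pi}(W_{\tilde\pi}^0)$, and Theorem~\ref{1} applied to $\tilde\pi$---which is $GL_n(F)$-distinguished as $\pi$ is, and has the same conductor as $\pi$---expresses this as the Asai $L$-value attached to $\tilde\pi$. To match this with the Asai $L$-value for $\pi$ coming from Theorem~\ref{1}, I would use that distinction of $\pi$ forces $\tilde\pi \cong \pi^\sigma$ where $\sigma$ generates $\mathrm{Gal}(E/F)$, together with $\sigma$-invariance of the Asai $L$-function (immediate from its definition via tensor induction from $W_E$ to $W_F$), giving $L(s,\pi^\sigma,As) = L(s,\pi,As)$, and similarly for the unramified standard modules $\pi_u$ and $(\tilde\pi)_u$. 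The Asai $L$-values on the two sides then coincide, yielding $\ell^\prime(W_\pi^0) = \ell(W_\pi^0)$ and $c(\pi) = 1$.

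The principal obstacle is the self-duality claim $\phi(W_\pi^0) = W_{\tilde\pi}^0$. It is not a formal consequence of the $K_1(\p^{n(\pi)})$-invariance characterising the essential vector, since $\phi$ carries this invariance to a distinct congruence subgroup of $GL_n(\o_E)$, so one cannot simply appeal to uniqueness of the $K_1$-fixed line. The natural route is to use an intrinsic characterisation of $W_\pi^0$---for instance, as the unique Whittaker function producing the local Rankin--Selberg $L$-factor in zeta integrals paired with unramified representations of $GL_{n-1}(E)$---and to check that such zeta integrals for $\pi$ transform under $\phi$ into the corresponding integrals for $\tilde\pi$; the value at the identity $\phi(W_\pi^0)(1) = W_\pi^0(w)$ must be shown to equal $W_{\tilde\pi}^0(1)=1$, which can be extracted from the explicit formula of Theorem~\ref{essentialformula}.
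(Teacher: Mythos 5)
Your overall strategy matches the paper's: reduce via Proposition~\ref{constant-pi} to showing $\ell'(W_\pi^0)=\ell(W_\pi^0)$, compute $\ell'(W_\pi^0)$ by transporting the essential vector to the contragredient model, and match the resulting Asai $L$-values using $\widetilde{\pi}\simeq\pi^\sigma$. The matching of $L$-values for $\widetilde{\pi}$ and $\pi$ is also sound and is exactly what the paper's Corollary~\ref{obvious} records.

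However, the proposal hinges on the identity $\phi(W_\pi^0)=W_{\widetilde{\pi}}^0$, and this is false whenever $\pi$ is ramified. The correct relation, proved in the paper as Proposition~\ref{dual-of-essential} following \cite{jps81}, is
\[
\widetilde{W_\pi^0}=\epsilon(1/2,\pi,\psi)^{n-1}\,\widetilde{\pi}(p_m)W_{\widetilde{\pi}}^0,
\qquad p_m=\mathrm{diag}(\varpi_E^m I_{n-1},1),
\]
with $m=f(\pi)$. The extra factors do not disappear on their own. Your fallback computation $\phi(W_\pi^0)(1)=W_\pi^0(w)=1$ is also unavailable in the ramified case, since $w\notin H_n(f(\pi))$ and Theorem~\ref{essentialformula} only controls $W_\pi^0$ on $\mathrm{diag}(a,1)$ for $a$ in the torus of $G_{n-1}$, not at $w$. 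So the ``self-duality of the essential vector under $\phi$'' that you flag as the principal obstacle is not merely hard to prove; it is not true, and its failure is quantified precisely by the epsilon factor and the translation $p_m$.

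To close the gap you must handle both extra factors, and this is where the real content of the theorem lies. The scalar $\epsilon(1/2,\pi,\psi)^{n-1}$ is killed using the input that distinguished representations have trivial epsilon factor at $1/2$ for $\psi$ trivial on $F$, which is \cite[Theorem~3.6]{mo16}. The translation by $p_m$ is absorbed into $\ell$ only if $p_m$ (or a modification of it) lies in $G_n(F)$: when $E/F$ is unramified one may take $\varpi_E=\varpi_F$, but when $E/F$ is ramified one needs the conductor $m=f(\pi)$ to be even so that $p_m$ can be replaced by $q_l=\mathrm{diag}(\varpi_F^l,\dots,\varpi_F^l,1)\in A_n(F)$ with $m=2l$ (using the right $G_{n-1}(\o_E)$-invariance of the essential vector). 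This even-conductor fact is Lemma~\ref{evenconductor}, itself a consequence of the same result of \cite{mo16}. Without these two ingredients, the argument does not go through.
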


\begin{remark}
If $\pi$ is an irreducible admissible generic representation of $GL_n(E)$ such that $\ell^\prime=\ell$ then it follows that the linear form $\ell$ is in fact $GL_n(F)$-invariant and thus $\pi$ is distinguished with respect to $GL_n(F)$. Indeed, $P_n(F)$ together with ${^t}P_n(F)$ generate $GL_n(F)$ \cite[theorem 1.3]{akt04}.
\end{remark}
 
Theorem \ref{2} is stated as \cite[Corollary 7.2]{off11} in the work of Offen \cite{off11}, but the proof there is valid only for relatively supercuspidal representations (see Remark \ref{rmk1} for further explanations). However, such a result is useful and in fact has already been used in a recent work of Lapid and Mao (\cite[Corollary 7.2]{off11} is referred to as \cite[Lemma 1.1]{lm14} in their paper, see Remark \ref{rmk2}). It seems most natural to prove such a theorem by evaluating the forms on a suitable test vector as we do in this paper. We may also remark at this point that Theorem \ref{2} is equivalent to saying that the $GL_n(F)$-invariant linear form $\ell$, on the $GL_n(E)$-representation $\pi$, is invariant under a larger group $GL(n,F) \rtimes \mathbb Z/2$, when $\pi$ is canonically considered as a representation of $GL(n,E) \rtimes \mathbb Z/2$ (see Remark \ref{rmk-dp} for more details). 

The key ingredient in the proof of Theorem \ref{1} is an explicit understanding of the essential vector of $\pi$ and in particular the formula to compute its value on the mirabolic subgroup \cite{mat13,miy14}. To conclude Theorem \ref{2}, in addition to a fine knowledge of the properties of the essential vector, we also need to appeal to the main result of \cite{mo16}, which relates distinction for the pair $(GL_n(E),GL_n(F))$ to the behavior of epsilon factor for pairs. Both Theorem \ref{1} and Theorem \ref{2} are proved in Section  \ref{sec-test}, under the assumption that $\pi$ is a unitary representation. The general case is taken up in Section  \ref{sec-nonunitary}. 

\section{Preliminaries}\label{prelim}

Let $F$ be a non-archimedean local field of characteristic zero. Let $\o_F$ be its ring of integers and $\p_F$ the unique maximal ideal of $\o_F$. Let $\w_F$ be a uniformizing parameter of $F$; thus $\p_F=\w_F\o_F$. We denote the cardinality of $\o_F/\p_F$ by $q_F$. Let $|\cdot|_F$ denote the normalized absolute value of $F$; thus $|\w_F|_F=q_F^{-1}$. Let  $E$ be a quadratic extension of $F$. Similarly we have its associated objects $\o_E$, $\p_E$, $\w_E$, $q_E$, and $|\cdot|_E$. We denote by $\sigma$ the non-trivial element of the Galois group ${\rm Gal}(E/F)$.

We set $G_n=GL(n,.)$. Let $\nu_E$ (resp. $\nu_F$) denote the character $|\det(\ \cdot\ )|_E$ (resp. $|\det(\ \cdot\ )|_F$) on $G_n(E)$ (resp. $G_n(F)$). We fix the Haar measure $dg$ on $G_n(E)$ such 
that $dg(G_n(\o_E))=1$, and the Haar measure $dh$ on $G_n(F)$ such that $dh(G_n(\o_F))=1$. Let $B_n(E)$ be the Borel subgroup of upper triangular matrices in $G_n(E)$, and $N_n(E)$ its unipotent radical. If $\psi$ is a non-trivial character of $E$, we continue to denote by $\psi$  the character of $N_n(E)$ given by $n\mapsto \psi(\sum_{i=1}^{n-1} n_{i,i+1})$. The maximal torus of $G_n(E)$ consisting of all diagonal matrices is denoted by $A_n(E)$. 

We denote by $\times$ the normalized parabolic induction. We will consider representations parabolically induced from essentially square integrable representations of Levi subgroups of parabolic subgroups of $G_n(E)$. First we recall that an irreducible admissible 
representation $\d$ of $G_n(E)$ is called \textit{essentially square integrable} if, after twisting by a character if necessary, it is \textit{square integrable}; i.e., it can be realized as a subrepresentation of $L^2(Z_n(E)\backslash G_n(E))$, where $Z_n(E)$ is the center of $G_n(E)$. According to \cite[Theorem 9.3]{zel80}, such a representation is the unique irreducible quotient of representations of the form 
\[\nu_E^{1-k}\rho \times \dots \times \rho,\] where $\rho$ is a supercuspidal representation of $G_r(E)$, with $n=rk$. We denote by 
$[\nu_E^{1-k}\rho , \dots ,\rho]$ such a quotient. We say that 
\[\d= [\nu_E^{1-k}\rho ,\dots ,\rho] {\rm ~\textit{precedes}~} \d^\prime= [\nu_E^{1-k^\prime}\rho^\prime , \dots, \rho^\prime] \]
if $\rho^\prime=\nu_E^l\rho$ for $\mathrm{max}(1,k^\prime-k+1)\leq l \leq k^\prime$, which we will denote by $\d\prec \d^\prime$. We say that $\d$ and $\d^\prime$ are \textit{linked} if either 
$\d\prec \d^\prime$ or $\d^\prime \prec \d$, and \textit{unlinked} otherwise.

An irreducible admissible representation $\pi$ of $G_n(E)$ is called \textit{generic} if 
\[{\rm Hom}_{N_n(E)}(\pi,\psi)\neq 0,\] in which case the above space is known to be one dimensional by a well-known result due to Gelfand and Kazhdan (cf. \cite{gk75}). If $\pi$ is an irreducible admissible generic representation of $G_n(E)$, by a result of Zelevinsky, we can write $\pi$ uniquely as a commutative product $$\pi=\d_1\times \dots \times \d_t$$ of unlinked essentially square integrable representations (cf. \cite[Theorem 9.7]{zel80}).
 
A more general class of representations is that of \textit{standard modules}, and these are the representations of the form 
$\d_1\times \dots \times \d_t$, with $\d_i$ essentially square integrable satisfying $\d_i\not\prec \d_j$ if $i<j$. For a standard module, 
the representations $\d_i$ are unique up to a reordering preserving the ``non-preceding'' condition. Standard modules $\pi$ also satisfy  \[\dim {\rm Hom}_{N_n(E)}(\pi,\psi)=1,\] and the intertwining operator 
from $\pi$ to ${\rm Ind}_{N_n(E)}^{G_n(E)}(\psi)$ is still injective \cite{js83}. The \textit{Whittaker model} of $\pi$, by definition, is the image of this embedding. We denote the Whittaker model of $\pi$ with respect to $\psi$ by $W(\pi,\psi)$.

For a smooth representation $\pi$ of $G_n(E)$, we denote by $\widetilde{\pi}$, the representation 
\[g\mapsto \pi(^t\!g^{-1})\]
of $G_n(E)$. If $\pi$ is moreover irreducible, then by \cite{gk75}, the representation $\widetilde{\pi}$ is isomorphic 
to the \textit{contragredient} of $\pi$. If $\pi$ is a standard module of $G_n(E)$, then for $W \in W(\pi,\psi)$, we define
\[\widetilde{W}(g)=W(w ~{^t}g^{-1}),\]
where $w$ is the longest Weyl element in $G_n(E)$ with $1$'s on the anti-diagonal and $0$'s elsewhere. It is easy to see then that $\widetilde{W} \in W(\widetilde{\pi},\psi^{-1})$.

If an admissible representation $\pi$ of $G_n(E)$ has a central character (for example a representation parabolically 
induced from an irreducible representation of a Levi), we denote it by $\omega_\pi$. If $\iota: G_n(E) \rightarrow G_n(E)$ is a homomorphism, by $\pi^\iota$ we mean the representation given by $\pi^\iota(g)=\pi(\iota(g))$. In the sequel, we will typically apply this to the homomorphism given by the Galois involution $\sigma$.

\section{Asai and Rankin-Selberg integrals}

Let $\pi$ and $\pi^\prime$ be two standard modules of $G_n(E)$ and $G_m(E)$ respectively. We denote by $L(s,\pi \times \pi^\prime)$ the $L$-factor attached to such a pair in \cite{jps83}. 
As we shall need more precise information when $m=n-1$, we recall the construction in this case. 

It is shown in \cite{jps83} that there is a real number $r_{\pi,\pi^\prime}$ such that $\mathrm{Re}(s)\geq r_{\pi,\pi^\prime}$ implies that for $W\in W(\pi,\psi)$ and 
$W'\in W(\pi^\prime,\psi^{-1})$, the integrals 
\[I(s,W,W')=\int_{N_{n-1}(E)\backslash G_{n-1}(E)} W\left(\begin{array}{cc} g & 0 \\ 0 & 1 \end{array}\right)W'(g)\nu_E(g)^{s-\frac{1}{2}} dg\] 
are absolutely convergent. In fact they extend to elements of $\C(q_E^{-s})$, i.e., rational functions in $q_E^{-s}$, and the vector space they span, as $W$ and $W'$ vary, is a fractional ideal $I(\pi,\pi^\prime)$ 
of the ring of Laurent polynomials $\C[q_E^{\pm s}]$. This fractional ideal is generated by a single element of the form $P(q_E^{-s})^{-1}$, for a polynomial $P$, and if $P$ is normalized so that $P(0)=1$, we denote this generator by $L(s,\pi \times \pi^\prime)$.

The following lemma is \cite[Lemma 2.3]{mo16} taking $u=1$; it is a consequence of the standard results of \cite{jps83}.

\begin{lemma}\label{poleA}
If $\d_1$ and $\d_2$ are two essentially square integrable representations of $G_n(E)$, then $L(s,\d_1 \times \d_2)$ has a pole at $s=1$ if and only if $\d_1\prec \widetilde{\d_2}$.
\end{lemma}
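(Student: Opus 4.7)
The plan is to reduce to the equal-rank case and then invoke the classical multiplicativity of the $L$-factor for pairs of essentially square integrable representations, matching the resulting pole condition against the combinatorial condition $\d_1\prec\widetilde{\d_2}$.

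\emph{Reduction.} First I would write $\d_i=[\nu_E^{1-k_i}\rho_i,\ldots,\rho_i]$ with $\rho_i$ a supercuspidal representation of $G_{r_i}(E)$, so $n=r_1k_1=r_2k_2$. I would then note that if $\widetilde{\rho_2}$ is not an unramified twist of $\rho_1$, both sides of the equivalence are false: the $L$-factor is identically $1$ by the standard supercuspidal-support analysis in \cite{jps83}, while $\d_1\prec\widetilde{\d_2}$ fails because, writing $\widetilde{\d_2}=[\nu_E^{1-k_2}\rho'_2,\ldots,\rho'_2]$ with $\rho'_2:=\nu_E^{k_2-1}\widetilde{\rho_2}$, the defining condition explicitly requires $\rho'_2$ to be an unramified twist of $\rho_1$. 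Hence I may assume $r_1=r_2=:r$, so that also $k_1=k_2=:k$, and that $\widetilde{\rho_2}\cong \nu_E^{t}\rho_1$ for some $t\in\C$.

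\emph{Main step.} Next I would appeal to multiplicativity of the local $L$-factor. On the Galois side, each parameter is $\phi_{\d_i}=\phi_{\rho_i}\otimes \mathrm{Sp}_k\otimes|\cdot|_E^{(1-k)/2}$. Combining the Clebsch--Gordan decomposition $\mathrm{Sp}_k\otimes \mathrm{Sp}_k=\bigoplus_{j=0}^{k-1}\mathrm{Sp}_{2k-1-2j}$ with the standard identity $L(s,V\otimes \mathrm{Sp}_m)=L(s+(m-1)/2,V)$ for a Frobenius-semisimple Weil representation $V$ yields
\[L(s,\d_1\times \d_2)=\prod_{j=0}^{k-1}L(s-j,\rho_1\times \rho_2).\]
Since $\rho_1$ and $\rho_2$ are supercuspidal, $L(u,\rho_1\times \rho_2)$ has a pole at $u=u_0$ if and only if $\widetilde{\rho_2}\cong \nu_E^{u_0}\rho_1$. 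Therefore $L(s,\d_1\times \d_2)$ has a pole at $s=1$ exactly when there exists $j\in\{0,\ldots,k-1\}$ with $\widetilde{\rho_2}\cong \nu_E^{1-j}\rho_1$.

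\emph{Matching the combinatorics and the main obstacle.} Finally, setting $l=k-j\in\{1,\ldots,k\}$ and using $\rho'_2=\nu_E^{k-1}\widetilde{\rho_2}$, the above pole condition becomes $\rho'_2\cong \nu_E^{l}\rho_1$ for some $l\in\{1,\ldots,k\}$. Since $k_1=k_2=k$ forces $\max(1,k_2-k_1+1)=1$, this is exactly the defining condition of $\d_1\prec \widetilde{\d_2}$ recalled in the preliminaries, which closes the equivalence. The main obstacle is the normalization bookkeeping: the paper writes each segment as \emph{ending} at its base supercuspidal, so one must carefully re-express the contragredient $\widetilde{\d_2}$ in this same form, with new base $\rho'_2=\nu_E^{k_2-1}\widetilde{\rho_2}$, before the shift $-j$ in the multiplicativity formula can be lined up with the inequalities $\max(1,k_2-k_1+1)\le l\le k_2$ appearing in the definition of $\prec$.
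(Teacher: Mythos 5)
Your proof is correct. Note first that the paper itself does not prove this lemma; it merely cites \cite[Lemma 2.3]{mo16} (taking $u=1$) and observes that it follows from the standard results of \cite{jps83}, so there is no proof in the paper to compare with, and you have to supply the argument yourself, which you do. Your route is the expected one: reduce to a common cuspidal line (both sides being trivially false otherwise), invoke the product expansion
\[L(s,\d_1\times\d_2)=\prod_{j=0}^{k-1}L(s-j,\rho_1\times\rho_2),\]
use the supercuspidal pole criterion $L(u,\rho_1\times\rho_2)$ has a pole at $u=u_0$ iff $\widetilde{\rho_2}\cong\nu_E^{u_0}\rho_1$, and then match the resulting set $\{\widetilde{\rho_2}\cong\nu_E^{1-j}\rho_1:\,0\le j\le k-1\}$ against the definition of $\prec$ after rewriting $\widetilde{\d_2}$ in the paper's normalization $[\nu_E^{1-k}\rho_2',\ldots,\rho_2']$ with $\rho_2'=\nu_E^{k-1}\widetilde{\rho_2}$; the substitution $l=k-j$ lines these up exactly with $1\le l\le k=\max(1,k_2-k_1+1),\ldots,k_2$ since here $k_1=k_2$. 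The only cosmetic deviation from the route the citation points to is that you derive the product expansion on the Galois side via $\mathrm{Sp}_k\otimes\mathrm{Sp}_k=\bigoplus_{j}\mathrm{Sp}_{2k-1-2j}$ together with the shift identity for $L(s,V\otimes\mathrm{Sp}_m)$, which implicitly appeals to compatibility of the JPSS factor with the Galois-side $L$-factor for discrete series; the identical formula can be read off directly from the explicit computations of \cite{jps83} for pairs of segments (their \S 8), which is presumably what the paper's remark ``a consequence of the standard results of \cite{jps83}'' intends. Either derivation is fine, and the combinatorial bookkeeping you flag as the main obstacle is handled correctly.
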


We will also need to consider the Asai integrals defined by Flicker in \cite{fli93}. We refer to \cite{mat09b} for a detailed description of the basic properties of these integrals. We start with a non-trivial additive character $\psi$ of $E$ which is trivial on $F$. Let  
\[\e_n=(0,\dots,0,1) \in F^n.\] 
If $\pi$ is a standard module of $G_n(E)$, 
there is a real number $r_\pi$ such that for $\mathrm{Re}(s)\geq r_{\pi}$, $W\in W(\pi,\psi)$ and $\Phi\in \mathcal{C}_c^\infty(F^n)$, the integrals 
\[I(s,W,\Phi)=\int_{N_n(F)\backslash G_n(F)} W(h)\Phi(\e_n h)\nu_F(h)^s dh\]
are absolutely convergent. In fact, exactly as in the case of Rankin-Selberg integrals above, they extend to elements of $\C(q_F^{-s})$, and the vector space they span, as $W$ and $\Phi$ vary, is a fractional ideal $I(\pi)$ of $\C[q_F^{\pm s}]$ with a unique (normalized) generator which we denote by $L(s,\pi,As)$.

The following lemma can be found in Section 3 of \cite{mat09b}.
\begin{lemma}\label{inclusion}
The integrals 
$$I_{(0)}(s,W)=\int_{N_n(F)\backslash P_n(F)} W(h)\nu_F(h)^{s-1} dh=\int_{N_{n-1}(F)\backslash G_{n-1}(F)} W(h)\nu_F(h)^{s-1} dh$$
are in fact of the form $I(s,W,\Phi)$, for some well chosen $\Phi$, and hence they also belong to $I(\pi)$.
\end{lemma}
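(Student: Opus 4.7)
The plan is to exhibit, for each given $W \in W(\pi,\psi)$, an explicit $\Phi \in \mathcal{C}_c^\infty(F^n)$ for which $I(s,W,\Phi) = I_{(0)}(s,W)$, thereby placing $I_{(0)}(s,W)$ inside $I(\pi)$.

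The second equality in the statement of the lemma is routine: for $p = \begin{pmatrix} h & u \\ 0 & 1 \end{pmatrix} \in P_n(F)$ one has $p \equiv \begin{pmatrix} h & 0 \\ 0 & 1 \end{pmatrix}$ modulo $N_n(F)$ on the left, which gives the bijection $N_n(F)\backslash P_n(F) \simeq N_{n-1}(F)\backslash G_{n-1}(F)$ sending $p \mapsto h$, and under this identification $|\det p|_F = |\det h|_F$. Note also that because $\psi$ is trivial on $F$, the Whittaker function $W$ is genuinely left $N_n(F)$-invariant (not merely $\psi$-equivariant), so the integrals below are unambiguous.

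For the main assertion, first I would use the fact that $P_n(F)$ is the stabilizer of $\e_n$ under the right action of $G_n(F)$ on row vectors, which realizes $P_n(F)\backslash G_n(F) \simeq F^n \setminus \{0\}$ via $g \mapsto \e_n g$. On the Zariski-open subset $\{g \in G_n(F) : g_{nn} \neq 0\}$, every $g$ factors uniquely as $g = p \cdot \gamma_v$ with $p \in P_n(F)$ and $\gamma_v = \begin{pmatrix} I_{n-1} & 0 \\ v' & v_n \end{pmatrix}$, where $v = (v',v_n) = \e_n g \in F^{n-1}\times F^\times$. A direct block-Jacobian computation then shows that the Haar measure decomposes, on this open cell, as $dg = d_L p \cdot dv'\, dv_n/|v_n|_F$ with $d_L p$ left-Haar on $P_n(F)$; and tracking the modular character through the isomorphism of the previous paragraph, the induced quotient measure on $N_n(F)\backslash P_n(F)$ becomes $dh/|\det h|_F$ on $N_{n-1}(F)\backslash G_{n-1}(F)$, not $dh$. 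Substituting into the Asai integral and using $\e_n g = v$ and $|\det g|_F^s = |\det h|_F^s |v_n|_F^s$, I obtain
\[
I(s,W,\Phi) = \int_{F^{n-1}}\!\int_{F^\times} \Phi(v',v_n)\, |v_n|_F^{s-1} \left( \int_{N_{n-1}(F)\backslash G_{n-1}(F)} W\!\left(\begin{pmatrix} h & 0 \\ 0 & 1 \end{pmatrix}\gamma_v\right) |\det h|_F^{s-1}\, dh \right) dv'\, dv_n.
\]

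To conclude, pick $M$ so large that $W$ is right-invariant under $\mathbf{1}_n + \w_E^M M_n(\o_E)$, choose $N$ with $\w_F^N \o_F \subset \w_E^M \o_E$, set $U := \w_F^N \o_F^{n-1} \times (1 + \w_F^N \o_F) \subset F^n$, and take $\Phi := \mathrm{vol}(U)^{-1} \mathbf{1}_U$. For $v \in U$ one has $\gamma_v \in \mathbf{1}_n + \w_E^M M_n(\o_E)$, so $W\!\left(\begin{pmatrix} h & 0 \\ 0 & 1 \end{pmatrix}\gamma_v\right) = W\!\left(\begin{pmatrix} h & 0 \\ 0 & 1 \end{pmatrix}\right)$, and also $|v_n|_F = 1$; the outer integral collapses to $1$, and the inner integral is exactly $I_{(0)}(s,W)$, giving the desired identity. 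The main technical obstacle is the measure decomposition in the previous paragraph: the $|\det h|_F^{-1}$ factor appearing in the quotient measure is precisely what converts the Asai exponent $s$ into the mirabolic exponent $s-1$, and getting this bookkeeping right is essential.
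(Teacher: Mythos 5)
Your proof is correct. The paper itself gives no proof of this lemma (it is cited from Section 3 of \cite{mat09b}), and your argument is the standard one used there and in the Rankin--Selberg literature: restrict to the open cell $g=p\gamma_v$, track the Haar measure, and collapse the $\Phi$-integral by taking $\Phi$ to be the normalized indicator of a small compact open neighbourhood of $\e_n$ on which the right translates of $W$ are constant. The one genuinely delicate step is the measure bookkeeping, and I verified that you have it right: the Jacobian of $(p,v)\mapsto p\gamma_v$ on matrix entries is $|v_n|_F^{\,n-1}$, which combined with $|\det g|_F^{-n}=|\det h|_F^{-n}|v_n|_F^{-n}$ and $d_Lp=dh\,db/|\det h|_F$ gives exactly $dg=d_Lp\cdot dv'\,dv_n/|v_n|_F$, while the quotient of $d_Lp$ by Haar on $N_n(F)$ does indeed correspond to $dh/|\det h|_F$ on $N_{n-1}(F)\backslash G_{n-1}(F)$. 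These two factors of $|\cdot|_F^{-1}$ are precisely what drop the Asai exponent from $s$ to $s-1$, as you say, and your choice of $N$ (so that $\w_F^N\o_F\subset\w_E^M\o_E$, hence $\gamma_v\in \mathbf{1}_n+\w_E^M M_n(\o_E)$ and $|v_n|_F=1$ for $v\in U$) together with the normalization $\Phi=\mathrm{vol}(U)^{-1}\mathbf{1}_U$ yields the exact identity $I(s,W,\Phi)=I_{(0)}(s,W)$, first for $\mathrm{Re}(s)$ large and then as rational functions of $q_F^{-s}$.
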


The multiplicativity relation of the Asai $L$-functions of Flicker
is a consequence of \cite[Theorem 4.26]{mat09b} and \cite[Theorem 5.2]{mat09a}.

\begin{theorem}\label{multiplicativity-standard}
If $\pi=\d_1\times \dots \times \d_t$ is a standard module of $G_n(E)$, then 
\[L(s,\pi,As)=\prod_i L(s,\d_i,As)\prod_{j<k} L(s,\d_j \times \d_k^\sigma).\]
\end{theorem}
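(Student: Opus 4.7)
The plan is to prove the identity by induction on $t$, reducing to a two-factor multiplicativity statement. The base case $t=1$ is tautological. For the inductive step, write $\pi = \d_1 \times \pi'$ where $\pi' = \d_2 \times \cdots \times \d_t$ inherits the non-preceding condition and is again a standard module. The inductive hypothesis applied to $\pi'$ then reduces matters to the two-factor identity
\[L(s, \d_1 \times \pi', As) = L(s, \d_1, As)\, L(s, \pi', As)\, L(s, \d_1 \times (\pi')^\sigma).\]

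For this, I would invoke the two cited results. Theorem 4.26 of \cite{mat09b} gives the multiplicativity of the Asai gamma factor along parabolic induction,
\[\gamma(s, \d_1 \times \pi', As, \psi) = \gamma(s, \d_1, As, \psi)\, \gamma(s, \pi', As, \psi)\, \gamma(s, \d_1 \times (\pi')^\sigma, \psi, \psi).\]
One divisibility of the L-factor identity is then obtained geometrically: for $W$ in the induced model $W(\pi, \psi)$, decomposing $I(s, W, \Phi)$ along the Bruhat cells associated to the block-upper-triangular parabolic separates the integral into a product of an Asai integral for $\d_1$, an Asai integral for $\pi'$, and a Rankin-Selberg integral for $\d_1 \times (\pi')^\sigma$. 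This shows that $L(s, \d_1 \times \pi', As)$ divides the product on the right. The reverse divisibility is precisely the content of Theorem 5.2 of \cite{mat09a} applied to standard modules; it upgrades the gamma-factor multiplicativity above to the L-factor equality.

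The main obstacle is ruling out cancellations when converting the gamma identity to an L-factor identity. In general, gamma multiplicativity only determines the ratio of L-factors up to L-factors of contragredients, and the standard-module hypothesis is precisely what is needed to eliminate this ambiguity. Through Lemma \ref{poleA} (and its Asai analogue), the non-preceding condition controls all potential poles of the factors $L(s, \d_j \times \d_k^\sigma)$, and simultaneously ensures that the contragredient $\widetilde{\pi} = \widetilde{\d_t} \times \cdots \times \widetilde{\d_1}$ remains a standard module, so that no spurious cancellations occur on either side of the functional equation. Iterating the two-factor identity through the induction then yields the full formula.
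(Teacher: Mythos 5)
The paper offers no proof of this theorem: it simply records the statement as a consequence of \cite[Theorem 4.26]{mat09b} and \cite[Theorem 5.2]{mat09a}, so your proposal is attempting to reconstruct the content of those citations. The induction-on-$t$ scaffolding and the reduction to a two-block identity are fine, but the step where you ``rule out cancellations when converting the gamma identity to an $L$-factor identity'' via Lemma \ref{poleA} does not work. Lemma \ref{poleA} only controls whether $L(s,\d_1\times\d_2)$ has a pole at the single point $s=1$; to show that a product identity of gamma factors lifts to a product identity of $L$-factors one must compare the \emph{complete} pole divisors, typically by exhibiting test vectors achieving each putative $L$-factor (or by matching against the Galois-side computation), and the pole-at-$1$ lemma says nothing about that. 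That upgrade is precisely the substance of Theorem 5.2 of \cite{mat09a} --- which is why the paper cites it alongside \cite{mat09b} --- so your argument should simply lean on it rather than try to re-derive it from an $s=1$ pole analysis.

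A secondary issue: your three-part flow --- gamma multiplicativity from Theorem 4.26, a geometric Bruhat-cell divisibility, and then Theorem 5.2 to ``upgrade'' to equality --- is logically redundant as written. If Theorem 5.2 already yields the $L$-factor identity for standard modules, the first two steps are subsumed into it; if it gives only a divisibility, you have not explained how gamma multiplicativity and the geometric divisibility combine with it to force equality. You also use the Rankin--Selberg multiplicativity $L(s,\d_1\times(\pi')^\sigma)=\prod_{k\ge 2}L(s,\d_1\times\d_k^\sigma)$ for the standard module $\pi'$ without comment; this is itself a nontrivial result of \cite{jps83} for parabolically induced representations, and your induction depends on it, so it should be flagged explicitly.
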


\begin{remark}
We recall that Theorem \ref{multiplicativity-standard} implies that 
the Asai $L$-function of $\pi$ defined as above is equal to that of the Langlands parameter of the Langlands quotient of $\pi$ \cite[Theorem 5.3]{mat09a}. Therefore, we will not be very precise about the definition of the Asai $L$-function that we refer to, as they both agree. In particular, for a generic unramified representation $\pi$ of $G_n(E)$, in what follows we appeal to a result due to Flicker \cite[Proposition 3]{fli88} to assert that
\[I(s,W_\pi^0,\Phi_0)=L(s,\pi,As),\]
where $W_\pi^0$ is the normalized spherical vector of $\pi$ and $\Phi_0$ is the characteristic function of $\o_F^n$, though Flicker's result originally had the Asai $L$-function of the Langlands parameter.
\end{remark}

We will need the following consequence of Lemma \ref{poleA} and  \cite[Theorem 6]{kab04}.

\begin{lemma}\label{poleB}
Let $\d$ be an essentially square integrable representation of $G_n(E)$.  If $L(s,\d,As)$ has a pole at $s=1$ then $\d\prec \widetilde{\d}^\sigma$.
\end{lemma}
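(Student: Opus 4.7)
The strategy is to reduce the statement to Lemma \ref{poleA} via the multiplicativity of Asai $L$-functions from Theorem \ref{multiplicativity-standard}. Writing the segment $\delta=[\nu_E^{1-k}\rho,\ldots,\rho]$ with $\rho$ a supercuspidal representation of some $G_r(E)$ where $n=rk$, and applying Theorem \ref{multiplicativity-standard} to the standard module whose Langlands quotient is $\delta$ (so that the two Asai $L$-functions agree, since each depends only on the underlying Langlands parameter), one obtains the factorization
\[
L(s,\delta,As)=\prod_{i=1}^{k} L(s,\nu_E^{1-i}\rho,As)\prod_{1\leq i<j\leq k} L(s,\nu_E^{1-i}\rho\times (\nu_E^{1-j}\rho)^\sigma).
\]
Since local $L$-factors have no zeros, a pole at $s=1$ on the left forces a pole at $s=1$ in at least one factor on the right.

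For a Rankin--Selberg factor $L(s,\nu_E^{1-i}\rho\times (\nu_E^{1-j}\rho)^\sigma)$, Lemma \ref{poleA} applied to the two supercuspidals $\nu_E^{1-i}\rho$ and $(\nu_E^{1-j}\rho)^\sigma$ characterizes a pole at $s=1$ by a twist-equality of the form $\rho^\sigma\cong \nu_E^{m}\widetilde\rho$ for an explicit integer $m$ depending on $i,j,k$. For an Asai factor $L(s,\nu_E^{1-i}\rho,As)$ to have a pole at $s=1$, the result \cite[Theorem 6]{kab04} provides an analogous restriction on the cuspidal data. Translating back through the identification
\[
\widetilde\delta^\sigma=[\nu_E^{1-k}\rho'',\ldots,\rho''],\qquad \rho''=\nu_E^{k-1}\widetilde\rho^\sigma,
\]
the defining condition $\rho''=\nu_E^{l}\rho$ with $1\leq l\leq k$ for $\delta\prec\widetilde\delta^\sigma$ becomes exactly the twist-equality $\rho^\sigma=\nu_E^{k-l-1}\widetilde\rho$ with $l$ in this range.

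The main obstacle is the combinatorial bookkeeping needed to confirm that every value of $m$ arising from a factor which can have a pole at $s=1$ lies in the range allowed by the definition of precedence, and therefore provides a valid choice of $l$. This is precisely where the content of \cite[Theorem 6]{kab04} plays its key role, in combination with the multiplicativity formula, to match the pole locations with the segment combinatorics and thereby yield the conclusion.
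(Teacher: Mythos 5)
Your proof has a genuine gap, and it occurs already in the first step. The multiplicativity formula of Theorem \ref{multiplicativity-standard} applies to a \emph{standard module} $\d_1\times\dots\times\d_t$ with $\d_i\not\prec\d_j$ for $i<j$, and it computes the Asai $L$-function of that standard module (equivalently, of the Langlands parameter of its \emph{Langlands quotient}). But there is no standard module of the form $\nu_E^{a_1}\rho\times\dots\times\nu_E^{a_k}\rho$ whose Langlands quotient is the discrete series $\d=[\nu_E^{1-k}\rho,\dots,\rho]$: the product $\nu_E^{1-k}\rho\times\dots\times\rho$ (the one whose unique irreducible quotient is $\d$) violates the non-preceding condition and so is not a standard module, while the standard-module ordering $\rho\times\nu_E^{-1}\rho\times\dots\times\nu_E^{1-k}\rho$ has the Speh/Zelevinsky representation, not $\d$, as its Langlands quotient. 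The discrete series $\d$ is \emph{itself} a standard module (with $t=1$), and its Langlands parameter carries the nontrivial $SL_2$ (monodromy) factor; its Asai $L$-function therefore does \emph{not} factor as $\prod_i L(s,\nu_E^{1-i}\rho,As)\prod_{i<j}L(s,\nu_E^{1-i}\rho\times(\nu_E^{1-j}\rho)^\sigma)$, which is the strictly larger (more poles) Asai $L$-function of the Speh representation. Even if you replace equality by divisibility, the bookkeeping does not close: a pole of the induced representation's Asai $L$-function at $s=1$ need not be a pole of $L(s,\d,As)$, and the exponent $l$ one extracts from the supercuspidal Rankin--Selberg factors lands outside the range $1\leq l\leq k$ required by the definition of $\d\prec\widetilde\d^\sigma$.

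The intended (and much shorter) route applies Kable's identity \cite[Theorem 6]{kab04} directly to $\d$ rather than to its cuspidal support. For $\d$ essentially square integrable one has
\[
L(s,\d\times\d^\sigma)=L(s,\d,As)\,L(s,\d\otimes\kappa,As).
\]
Since local $L$-factors never vanish, a pole of $L(s,\d,As)$ at $s=1$ forces a pole of $L(s,\d\times\d^\sigma)$ at $s=1$, and Lemma \ref{poleA}, applied with $\d_1=\d$ and $\d_2=\d^\sigma$, gives $\d\prec\widetilde{\d^\sigma}=\widetilde\d^\sigma$. No decomposition into cuspidals, no multiplicativity theorem, and no combinatorial matching are needed.
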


\begin{remark}
According to \cite[Theorem 6]{kab04}, we have the identity  
\[L(s,\delta^\prime \times \delta^{\prime\sigma})=L(s,\delta^\prime,As)L(s,\delta^\prime \otimes \kappa,As),\]
where $\kappa$ is any character of $E^\times$ which extends the non-trivial character of $F^\times/N_{E/F}(F^\times)$, for a square integrable representation $\delta^\prime$ of $G_n(E)$. Observe that the identity is valid for an essentially square integrable representation of $G_n(E)$ as well. Indeed, since an essentially square integrable representation $\delta$ of $G_n(E)$ is of the form $\delta \otimes \nu_E^a$, for $a \geq 0$, for a square integrable representation $\delta$ of $G_n(E)$, and we then have 
\[L(s,\delta \times \delta^\sigma)=L(s+2a,\delta^\prime \times \delta^{\prime\sigma})\]
and
\[L(s,\delta,As)=L(s+2a,\delta^\prime,As).\]
Note that the latter identity follows since $\nu_E=\nu_F^2$. 
\end{remark}

\section{Conductor and the essential vector}\label{sec-essential}

\subsection{A few lemmas on the conductor}

If $\psi$ is a non-trivial character of $E$, we denote by $n(\psi)$ its \textit{conductor}, i.e. the largest integer $k$ such that 
$\psi$ is trivial on $\p_E^{-k}$. If $\chi$ is a character of $E^*$, we denote by $f(\chi)$ its \textit{conductor}, i.e. $f(\chi)=0$ if 
$\chi$ is unramified, otherwise $f(\chi)$ is the smallest $m$ such that $\chi$ is trivial on $1+\p_E^m$.

\begin{lemma}\label{lem-even}
Suppose that $E$ is ramified over $F$, then any non-trivial character of $E$ that is trivial on $F$ is of even conductor.
\end{lemma}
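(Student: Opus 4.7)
The plan is to reduce the computation of $n(\psi)$ to that of the conductor of an auxiliary character on $F$. Fix a uniformizer $\w_E$ of $E$; by total ramification, $\o_E = \o_F + \o_F\w_E$, so $E = F \oplus F\w_E$ as $F$-vector spaces. Inspecting $v_E$-valuations on $\o_E$ yields $\p_E = \p_F + \o_F\w_E$, which combined with $\p_F\o_E = \p_E^2$ gives, for every $m \in \Z$,
\[\p_E^{2m} = \p_F^m + \p_F^m\w_E, \qquad \p_E^{2m+1} = \p_F^{m+1} + \p_F^m\w_E.\]

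Since $\psi$ is trivial on $F$, setting $\chi(b) := \psi(b\w_E)$ defines a character of $F$ with the property that $\psi(a + b\w_E) = \chi(b)$ for all $a, b \in F$; this $\chi$ is nontrivial, for otherwise $\psi$ would vanish on $F + F\w_E = E$. Substituting the decompositions above, triviality of $\psi$ on $\p_E^{-k}$ becomes a triviality condition on $\chi$ restricted to a fractional ideal of $F$: on $\p_F^{-m}$ when $k = 2m$, and on $\p_F^{-(m+1)}$ when $k = 2m+1$.

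Denoting the conductor of $\chi$ by $n(\chi)$, the largest even $k$ with $\psi|_{\p_E^{-k}} = 1$ is $2n(\chi)$, while the largest odd such $k$ is $2n(\chi) - 1$. Therefore $n(\psi) = 2n(\chi)$, which is even. There is no substantial obstacle; the argument is a direct unfolding of definitions once $\p_E^k$ is expressed in the $\o_F$-basis $\{1, \w_E\}$. The key observation is that the triviality conditions on $\chi$ coming from $\p_E^{-(2m+1)}$ and $\p_E^{-(2m+2)}$ coincide, so $\psi$ trivial on the former forces $\psi$ trivial on the latter, ruling out odd values of $n(\psi)$.
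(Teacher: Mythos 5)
Your proof is correct, and it takes a genuinely different and more elementary route than the paper. The paper's proof of Lemma \ref{lem-even} invokes the theorem of Fr\"ohlich--Queyrut to assert $\epsilon(1/2,\chi,\psi)=1$ for the nontrivial unramified character $\chi$ of $E^\times$ that is trivial on $F^\times$, and then uses the standard formula $\epsilon(1/2,\chi,\psi)=\chi(\w_E)^{n(\psi)}\epsilon(1/2,1_{E^\times},\psi)=(-1)^{n(\psi)}$ for unramified twisting of local constants to conclude $n(\psi)$ is even; Remark \ref{rem-even} in the paper sketches a second, more hands-on argument via trace duality, writing $\psi=\psi'\circ\mathrm{Tr}_{E/F}(\Delta\,\cdot\,)$ with $\mathrm{Tr}_{E/F}(\Delta)=0$ and tracking the conductor $f(E/F)$. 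Your approach avoids both epsilon factors and the different/trace-duality machinery entirely: you exploit only that $\o_E=\o_F\oplus\o_F\w_E$ (monogenicity of the totally ramified extension), compute $\p_E^{2m}=\p_F^m\oplus\p_F^m\w_E$ and $\p_E^{2m+1}=\p_F^{m+1}\oplus\p_F^m\w_E$ by the parity trick on $v_E$, and observe that since $\psi$ kills the first summand, triviality on $\p_E^{-k}$ is governed entirely by the auxiliary character $\chi(b)=\psi(b\w_E)$ of $F$ on the second summand. The resulting identity $n(\psi)=2n(\chi)$ is clean and self-contained. What the paper's proof buys is brevity and an illustration that Fr\"ohlich--Queyrut forces parity constraints --- which is thematically parallel to the later Lemma \ref{evenconductor}, where the analogous parity statement for $f(\pi)$ is proved by the \emph{same} epsilon-factor twisting technique using \cite[Theorem 3.6]{mo16}; your more elementary argument would not generalize to that lemma, which is presumably why the authors preferred the $\epsilon$-factor route here. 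One small expository point: you should say explicitly that ramified quadratic means totally ramified (so that $\o_E=\o_F[\w_E]$ applies), though this is of course automatic for degree $2$.
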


\begin{proof}
Let $\psi$ be a character of $E/F$. The assumption that $E$ is ramified over $F$ implies that we can define an unramified character $\chi$ of $E^\times$ that is trivial on $F^\times$, simply 
by setting $\chi(\w_E)=-1$. Then, by \cite[Theorem 3]{fq73}, one has 
\[\epsilon(1/2,\chi,\psi)=1,\] where $\e$ is the 
local constant of Tate. By \cite[\S 3]{tat77}, one also has \[\epsilon(1/2,\chi,\psi)=\chi(\w_E)^{n(\psi)}\e(1/2,1_{E^*},\psi)=(-1)^{n(\psi)},\] and the result follows.
\end{proof}

\begin{remark}\label{rem-even}
Note that Lemma \ref{lem-even} can be seen directly too from the very definition of the conductor of an additive character. To this end, observe that if $\psi^\prime$ is an additive character of $F$, then,
\[
n(\psi^\prime \circ {\rm Tr}_{E/F}) =
\begin{cases}
n(\psi^\prime) &\text{if $E/F$ is unramified,} \\
2n(\psi^\prime)+f(E/F) &\text{if $E/F$ is ramified},
\end{cases}
\]
where $f(E/F)$ is the conductor of the quadratic extension $E/F$.  Any character of $E$ that is trivial on $F$ will be of the form
\[\psi(x)=\psi^\prime({\rm Tr}_{E/F}(\Delta x)),\]
for some $\psi^\prime : F \rightarrow \mathbb C^\times$, 
where $\Delta \in E^\times$ is an element of trace zero. Now we only need to observe that if $f(E/F)$ is odd then there is a uniformizer of trace zero and if $f(E/F)$ is even then there is a unit of trace zero. 
\end{remark}

As a consequence, we obtain the following lemma.

\begin{lemma}\label{level0}
There always exists a non-trivial character $\psi$ of $E$ that is trivial on $F$ such that $n(\psi)=0$.
\end{lemma}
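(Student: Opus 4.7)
The plan is to start from an arbitrary nontrivial additive character $\psi_0 : E \to \mathbb{C}^\times$ that is trivial on $F$ (such a character exists because $E/F$ is a nontrivial locally compact abelian quotient of $E$, whose Pontryagin dual is therefore nontrivial) and then twist it by a scalar from $F^\times$ to bring its conductor to zero.

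For $a \in F^\times$ set $\psi_a(x) := \psi_0(ax)$. Since $aF = F$, the character $\psi_a$ is still trivial on $F$. Unwinding the definition of the conductor, using that $a\,\mathfrak{p}_E^{-k} = \mathfrak{p}_E^{v_E(a)-k}$, gives the elementary identity $n(\psi_a) = n(\psi_0) + v_E(a)$.

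Write $m = n(\psi_0)$; the task is to produce $a \in F^\times$ with $v_E(a) = -m$. If $E/F$ is unramified, then $v_E$ restricts to $v_F$ on $F^\times$, so taking any $a \in F^\times$ with $v_F(a) = -m$ yields $n(\psi_a) = 0$. If $E/F$ is ramified, then $v_E(a) = 2 v_F(a)$ for $a \in F^\times$, so only even values of $v_E(a)$ are accessible via $F$-rational twists; however, Lemma \ref{lem-even} guarantees that $m$ is even, and choosing $a \in F^\times$ with $v_F(a) = -m/2$ then gives $n(\psi_a) = 0$.

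The only non-routine point is the ramified case, where there is an \emph{a priori} parity obstruction to reducing the conductor by an arbitrary integer through $F$-rational scalings. This obstruction is exactly what is removed by the parity statement of Lemma \ref{lem-even}, which is why invoking that lemma is the crux of the argument.
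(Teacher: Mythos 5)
Your proof is correct and follows essentially the same approach as the paper: start from an arbitrary nontrivial additive character of $E$ trivial on $F$, then twist by a power of $\varpi_F$ to kill the conductor, invoking Lemma~\ref{lem-even} in the ramified case to rule out the parity obstruction. The only difference is cosmetic — you spell out the general identity $n(\psi_a)=n(\psi_0)+v_E(a)$ and note the existence of $\psi_0$ via Pontryagin duality, whereas the paper simply writes the twist explicitly.
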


\begin{proof}
Take $\psi^\prime$ a character of $E$ trivial on $F$ of conductor, say $k$. 
If $E$ is unramified over $F$, then we define $\psi(x)=\psi^\prime(\w_F^{-k} x )$. If $E$ is ramified over $F$, then by the lemma above, $k=2l$ is even, and in this case we set $\psi(x)=\psi^\prime(\w_F^{-l}x)$.
\end{proof}

We now recall that if $\pi$ is an irreducible admissible representation of $G_n(E)$, and $\psi$ is a character of conductor zero, then its Godement-Jacquet epsilon factor $\e(s,\pi,\psi)$ (see \cite{gj72}) is of the form $\alpha q^{-f(\pi)s}$, where $f(\pi)$ is an integer independent 
of $\psi$ (since we chose $n(\psi)=0$). We call this integer $f(\pi)$ the \textit{conductor} of $\pi$.

We will call an irreducible admissible representation $\pi$ of $G_n(E)$ \textit{distinguished} with respect to $G_n(F)$ if the space 
\[{\rm Hom}_{G_n(F)}(\pi,\C)\] 
is not zero. In the sequel, we will need to know the parity of the conductor of a distinguished representation when $E/F$ is ramified. We take this up in the next lemma.

\begin{lemma}\label{evenconductor}
Let $E/F$ be ramified. If $\pi$ is an irreducible admissible representation of $G_n(E)$ distinguished with respect to $G_n(F)$, then its conductor is even.
\end{lemma}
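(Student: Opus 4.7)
The plan is to extract the parity of $f(\pi)$ from a comparison of the Godement--Jacquet $\epsilon$-factor of $\pi$ with that of a specific unramified twist. Fix, via Lemma~\ref{level0}, a non-trivial character $\psi$ of $E$ trivial on $F$ with $n(\psi)=0$, so that $f(\pi)$ is read off from $\epsilon(s,\pi,\psi)=\alpha\, q_E^{-f(\pi)s}$. Distinction of $\pi$ yields $\pi^\sigma\cong\widetilde{\pi}$, a standard compatibility for the symmetric pair $(G_n(E),G_n(F))$ (which in particular forces $\omega_\pi|_{F^\times}=1$). Since $\psi|_F=1$, one has $\psi\circ\sigma=\psi^{-1}$, so the Galois-equivariance $\epsilon(s,\pi^\sigma,\psi^\sigma)=\epsilon(s,\pi,\psi)$ combined with the standard twist $\epsilon(s,\pi,\psi^{-1})=\omega_\pi(-1)\epsilon(s,\pi,\psi)$ gives
\[
\epsilon(s,\widetilde{\pi},\psi)=\epsilon(s,\pi^\sigma,\psi)=\omega_\pi(-1)\,\epsilon(s,\pi,\psi).
\]
Feeding this into the Godement--Jacquet functional equation $\epsilon(s,\pi,\psi)\,\epsilon(1-s,\widetilde{\pi},\psi^{-1})=1$ yields the palindromic identity
\[
\epsilon(s,\pi,\psi)\,\epsilon(1-s,\pi,\psi)=1,
\]
and in particular $\epsilon(1/2,\pi,\psi)=\pm 1$.

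Next, let $\chi$ be the unramified character of $E^\times$ defined by $\chi(\w_E)=-1$. Ramification of $E/F$ means $\w_F=u\w_E^{2}$ for some $u\in\o_F^\times$, so $\chi(\w_F)=1$ and hence $\chi|_{F^\times}=1$. Therefore $\pi\otimes\chi$ is again distinguished with respect to $G_n(F)$, and since unramified twists preserve the conductor, $f(\pi\otimes\chi)=f(\pi)$. The same analysis applied to $\pi\otimes\chi$ gives $\epsilon(1/2,\pi\otimes\chi,\psi)=\pm 1$. On the other hand, the standard twist formula for unramified $\chi$ with $n(\psi)=0$ reads
\[
\epsilon(s,\pi\otimes\chi,\psi)=\chi(\w_E)^{f(\pi)}\epsilon(s,\pi,\psi)=(-1)^{f(\pi)}\epsilon(s,\pi,\psi),
\]
so at $s=1/2$ we obtain $\epsilon(1/2,\pi\otimes\chi,\psi)=(-1)^{f(\pi)}\epsilon(1/2,\pi,\psi)$.

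The main obstacle, which closes the argument, is to show that the two signs $\epsilon(1/2,\pi,\psi)$ and $\epsilon(1/2,\pi\otimes\chi,\psi)$ actually coincide. The natural route is to invoke a statement about the local root number of distinguished representations---namely, that $\epsilon(1/2,\pi,\psi)=1$ for every irreducible admissible representation of $G_n(E)$ distinguished with respect to $G_n(F)$, whenever $\psi$ is trivial on $F$ with $n(\psi)=0$. This is in the spirit of Fr\"ohlich--Queyrut \cite{fq73} (which handles $n=1$ and underlies Lemma~\ref{lem-even}) and should be accessible from the epsilon-factor characterization of distinction developed in \cite{ok97}, \cite{akt04} and \cite{mo16}. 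Granting this statement for both $\pi$ and $\pi\otimes\chi$, we conclude that $(-1)^{f(\pi)}=1$, whence $f(\pi)$ is even.
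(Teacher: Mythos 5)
Your argument is essentially the paper's: both proofs hinge on the fact that $\epsilon(1/2,\pi,\psi)=1$ for a $G_n(F)$-distinguished $\pi$ and $\psi$ trivial on $F$ (which is \cite[Theorem 3.6]{mo16}, applied to $\pi$ and to an unramified twist of it that is again distinguished), combined with the unramified-twist formula for $\epsilon$-factors to extract the parity of $f(\pi)$. The intermediate functional-equation computation you include, showing that distinction forces $\epsilon(1/2,\pi,\psi)=\pm 1$, is correct but ultimately superfluous: once you know the root number is exactly $1$ (not merely $\pm 1$) for \emph{both} $\pi$ and $\pi\otimes\chi$, the sign comparison $(-1)^{f(\pi)}=1$ falls out immediately, and the $\pm 1$ step contributes nothing further. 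Where you write that the needed input ``should be accessible'' from \cite{ok97}, \cite{akt04}, \cite{mo16}, the paper is more decisive and simply cites \cite[Theorem 3.6]{mo16}, which asserts precisely that $\epsilon(1/2,\pi_1\times\pi_2,\psi)=1$ for distinguished $\pi_1,\pi_2$ and $\psi$ trivial on $F$; taking $\pi_2$ to be the trivial character of $G_1(E)$ gives the statement you need. With that reference made precise, your proof is complete and coincides with the paper's.
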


\begin{proof}
To prove the lemma we make use of \cite[Theorem 3.6]{mo16}, from which it follows that 
\[\epsilon(1/2,\pi,\psi)=1\] if $\pi$ is distinguished with respect to $G_n(F)$ and $\psi$ is a non-trivial character of $E$ that is trivial when restricted to $F$. By Lemma \ref{level0}, we may choose such a $\psi$ with $n(\psi)=0$. As in the proof of that lemma let $\mu$ be an unramified quadratic character of $E^\times$ which is trivial on $F^\times$. Clearly $\pi \otimes \mu$ is also distinguished. Therefore,
\[\epsilon(1/2,\pi\otimes \mu,\psi)=1=\epsilon(1/2,\pi,\psi).\]
On the other hand, we have 
\[\epsilon(1/2,\pi\otimes \mu,\psi)=\mu(\varpi_E)^{f(\pi)}\epsilon(1/2,\pi,\psi),\]
by the well-known behavior of epsilon factors under twisting by an unramified character \cite[\S 3]{tat77}.
It follows that $f(\pi)$ is even. 
\end{proof}

\begin{remark}\label{rem-mo}
According to \cite[Theorem 3.6]{mo16}, if $\pi_1$ (resp. $\pi_2$) is an irreducible admissible representation of $G_{n_1}(E)$ (resp. $G_{n_2}(E)$) which is distinguished with respect to $G_{n_1}(F)$ (resp. $G_{n_2}(F)$), then
\[\epsilon(1/2,\pi_1 \times \pi_2,\psi)=1\]
where $\psi$ is a character of $E$ which is trivial on $F$. This result was established for cuspidal representations 
in Youngbin Ok's PhD thesis \cite{ok97}, where a cuspidal relative converse theorem for the pair $(G_n(E),G_n(F))$ was also proved. Both these results from \cite{ok97} are generalized in \cite{mo16}, to which we refer for more explanations about this topic. In fact, the aforementioned 
result of \cite{mo16} is \cite[Conjecture 5.1]{ana08}.  
\end{remark}

\begin{remark}
In the proof of Lemma \ref{evenconductor}, we made use of the effect of distinction on the epsilon factor and further knowledge of epsilon factors under twisting by unramified characters, since this seems to be the easiest way to conclude the lemma. Thus, our proof is in the spirit of the proof of Lemma \ref{lem-even}. It will be interesting to give a proof of Lemma \ref{evenconductor} by making use only of the definition of a distinguished representation. Note that in the case of Lemma \ref{lem-even}, Remark \ref{rem-even} provides a direct proof.
\end{remark}

\subsection{The essential vector of a generic representation}\label{subsec-essential}

In this section we fix a character $\psi$ of $E$ with $n(\psi)=0$. Let $\pi$ be an irreducible generic representation of $G_n(E)$. Denote by $H_n(k)$ the group $G_n(\o_E)$ if $k=0$, whereas if $k>0$ let $H_n(k)$ be the subgroup of $G_n(\o_E)$ given by 
\[H_n(k)=\left\{\begin{pmatrix} g & x \\ l & t \end{pmatrix} \in G_n(\o_E) \mid g \in G_{n-1}(\o_E), l \in \p_E^k, t\in 1+\p_E^k \right\}.\]
One of the main results of \cite{jps81} 
is that $f(\pi)$ is the smallest non-negative integer $k$ such that $\pi$ contains a vector fixed by $H_n(k)$, and moreover in this case, there is only
one Whittaker function in the Whittaker model $W(\pi,\psi)$ fixed by $H_n(f(\pi))$, and equal to $1$ on this subgroup \cite[Theorem 5.1]{jps81}.

For an irreducible admissible generic representation $\pi$ of $G_n(E)$, we denote by $W_\pi^0$ the unique right $H_n(f(\pi))$-invariant Whittaker 
function in $W(\pi,\psi)$ which equals $1$ on $H_n(f(\pi))$. This is called the \textit{essential vector} of $\pi$. More generally, if $\pi$ is an unramified standard module, reducible or not, we denote by $W_\pi^0$ the normalized spherical vector in its Whittaker model (which thus agrees with the essential vector when $\pi$ is irreducible). 

Let us now recall an important property of the essential vector of an irreducible  generic representation $\pi$ of $G_n(E)$. Whenever $\pi^\prime$ is an unramified standard module of 
$G_{n-1}(E)$, one has \cite[Theorem 4.1]{jps81}:
\[I(s,W_{\pi}^0,W_{\pi^\prime}^0)=L(s,\pi \times \pi^\prime),\]
and moreover the essential vector is the unique Whittaker function in $W(\pi,\psi)$ satisfying the above relation for all unramified standard modules $\pi^\prime$ of $G_{n-1}(E)$. 

We will need to use the following result which relates the essential vector of an irreducible representation to that of its contragredient. This result is extracted from of \cite[\S 5.3]{jps81} and we add the necessary details here.

\begin{proposition}\label{dual-of-essential}
Let $\pi$ be an irreducible generic representation with conductor $m$. Set 
\[p_m=\left(\begin{array}{cc} \w_E^m I_{n-1} & 0 \\ 0 & 1 \end{array}\right)\in G_n(E).\] Then we have the identity
\[\widetilde{W_\pi^0}=\e(1/2,\pi,\psi)^{n-1} \widetilde{\pi}(p_m)W_{\widetilde{\pi}}^0.\]
\end{proposition}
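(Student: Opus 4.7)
The plan is to verify the identity by recognizing both sides as essential vectors of $\widetilde{\pi}$ via the Rankin--Selberg characterization \cite[Theorem 4.1]{jps81}. Set
\[W \;:=\; \e(1/2,\pi,\psi)^{-(n-1)}\,\widetilde{\pi}(p_m^{-1})\,\widetilde{W_\pi^0},\]
so that the proposition becomes the equality $W = W_{\widetilde{\pi}}^0$. Note that $\widetilde{W_\pi^0}$ lies in $W(\widetilde{\pi},\psi^{-1})$, so all essential vectors of $\widetilde{\pi}$ here should be understood as realized in this Whittaker model. By the uniqueness clause in \cite[Theorem 4.1]{jps81}, it is enough to check that
\[I(s,W,W_{\pi'}^0) = L(s,\widetilde{\pi}\times \pi')\]
for every unramified standard module $\pi'$ of $G_{n-1}(E)$, where $W_{\pi'}^0 \in W(\pi',\psi)$ is the normalized spherical vector.

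First, I would handle the right translation by $p_m^{-1}$. Since $p_m^{-1}=\mathrm{diag}(\w_E^{-m}I_{n-1},1)$ centralizes the block embedding $g\mapsto \mathrm{diag}(g,1)$ of $G_{n-1}$, the change of variable $g\mapsto \w_E^m g$ yields
\[I(s,\widetilde{\pi}(p_m^{-1})\widetilde{W_\pi^0},W_{\pi'}^0) = q_E^{-m(n-1)(s-1/2)}\,\omega_{\pi'}(\w_E)^m\,I(s,\widetilde{W_\pi^0},W_{\pi'}^0).\]
Next, I would apply the Rankin--Selberg functional equation of \cite{jps83} to $W_\pi^0 \in W(\pi,\psi)$ and $\widetilde{W_{\pi'}^0}\in W(\widetilde{\pi'},\psi^{-1})$ (the normalized spherical vector of the unramified representation $\widetilde{\pi'}$). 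Using $\widetilde{\widetilde{W}}=W$ and the defining property $I(s,W_\pi^0,\widetilde{W_{\pi'}^0})=L(s,\pi\times \widetilde{\pi'})$ of the essential vector, this rewrites as
\[I(s,\widetilde{W_\pi^0},W_{\pi'}^0) = \omega_{\pi'}(-1)^{n-1}\,\e(1-s,\pi\times \widetilde{\pi'},\psi)\,L(s,\widetilde{\pi}\times \pi').\]

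Assembling the two computations, the proposition reduces to the scalar identity
\[\e(1-s,\pi\times \widetilde{\pi'},\psi) = \e(1/2,\pi,\psi)^{n-1}\,q_E^{m(n-1)(s-1/2)}\,\omega_{\pi'}(\w_E)^{-m}\,\omega_{\pi'}(-1)^{-(n-1)}.\]
This is the arithmetic heart of the argument. Because $\pi'$ is unramified, $\omega_{\pi'}$ is trivial on $\o_E^\times$, so $\omega_{\pi'}(-1)=1$; multiplicativity of $\e$-factors along the unramified Langlands parameter of $\widetilde{\pi'}$ together with the standard unramified twist formula $\e(s,\pi\otimes \chi,\psi)=\chi(\w_E)^{f(\pi)}\,\e(s,\pi,\psi)$ reduce everything to the explicit shape $\e(s,\pi,\psi)=\alpha\,q_E^{-ms}$ (with $f(\pi)=m$), at which point the identity becomes a routine monomial match in $q_E^{-s}$.

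The main obstacle is really the bookkeeping around the $\psi$ versus $\psi^{-1}$ conventions in the various Whittaker models and the central character factors $\omega_{\pi'}(\w_E)$ produced by both the translation by $p_m^{-1}$ and the unramified twist formula. That the prefactor has the precise shape $\e(1/2,\pi,\psi)^{n-1}$ is in fact forced by the calculation above: the $s$-dependent contributions from $\e(s,\pi,\psi)^{n-1}$ and from the translation factor $q_E^{-m(n-1)(s-1/2)}$ must cancel to leave an $s$-independent scalar, which happens precisely with this exponent.
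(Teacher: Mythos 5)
Your proof is correct and follows essentially the same route as the paper: both arguments reduce, via the uniqueness characterization of the essential vector from \cite[Theorem 4.1]{jps81}, to comparing the Rankin--Selberg integrals $I(s,\cdot,W_{\pi'}^0)$ of the two sides against all unramified standard modules $\pi'$ of $G_{n-1}(E)$, and both handle the diagonal translation by the same change of variable. The one small divergence is that the paper directly cites the closed formula of \cite[\S 5.3]{jps81} for $I(s,\widetilde{W_\pi^0},W_{\pi'}^0)$, whereas you re-derive it from the Rankin--Selberg functional equation together with the multiplicativity and unramified-twist formulas for $\epsilon$-factors; the end computation (matching monomials in $q_E^{-s}$ using $\e(s,\pi,\psi)=\alpha q_E^{-ms}$, and noting $\omega_{\pi'}(-1)=1$ for unramified $\pi'$) is exactly what underlies the cited JPSS formula, so this is the same mathematics unpacked by one extra step rather than a different strategy. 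One implicit hypothesis worth stating explicitly in your write-up: the unramified twist formula $\e(s,\pi\otimes\chi,\psi)=\chi(\w_E)^{f(\pi)}\e(s,\pi,\psi)$ and the identification $f(\pi)=m$ in the monomial $\e(s,\pi,\psi)=\alpha q_E^{-ms}$ both use that $\psi$ has conductor zero, which the surrounding section of the paper does assume.
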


\begin{proof}
Let $\nu_E^{t_1}\otimes \dots \otimes \nu_E^{t_{n-1}}$ be an unramified character of $A_{n-1}(E)$ such that 
\[\pi_t=\nu_E^{t_1}\times \dots \times \nu_E^{t_{n-1}}\] is an unramified standard module of $G_{n-1}(E)$. By \cite[\S 5.3]{jps81}, one has: 
\[I(s,\widetilde{W_\pi^0},W_{\pi_t}^0)=C^{n-1}q_{E}^{-m(n-1)}q_{E}^{m(n-1)s}q_E^{m \sum_{i=1}^{n-1} t_i}L(s,\widetilde{\pi} \times \pi_t),\]
where $C=\epsilon(0,\pi,\psi)$ (cf. \cite[Remark 5.4]{jps81}). 
On the other hand, by an easy change of variable, we see that: 
\begin{align*}
I(s,\widetilde{\pi}(p_m)W_{\widetilde{\pi}}^0,W_{\pi_t}^0) &=  
\nu_E(p_m)^{1/2-s}\omega_{\pi_t}^{-1}(\w_E^mI_{m-1})I(s,W_{\widetilde{\pi}}^0,W_{\pi_t}^0) \\
&= q_{E}^{-m(n-1)/2}q_{E}^{m(n-1)s}q_E^{m\sum_{i=1}^{n-1} t_i}I(s,W_{\widetilde{\pi}}^0,W_{\pi_t}^0) \\
&= q_{E}^{-m(n-1)/2}q_{E}^{m(n-1)s}q_E^{m\sum_{i=1}^{n-1} t_i}L(s,\widetilde{\pi} \times \pi_t).
\end{align*}
It follows that  
\begin{align*}
I(s,\widetilde{W_\pi^0},W_{\pi_t}^0) &= C^{n-1}q_{E}^{-m(n-1)/2}I(s,\widetilde{\pi}(p_m)W_{\widetilde{\pi}}^0,W_{\pi_t}^0)\\
&= \e(1/2,\pi,\psi)^{n-1}I(s,\widetilde{\pi}(p_m)W_{\widetilde{\pi}}^0,W_{\pi_t}^0)\\
&= I(s,\e(1/2,\pi,\psi)^{n-1}\widetilde{\pi}(p_m)W_{\widetilde{\pi}}^0,W_{\pi_t}^0).
\end{align*}
By the uniqueness property of the essential vector, we deduce the identity 
\[\widetilde{W_\pi^0}=\e(1/2,\pi,\psi)^{n-1} \widetilde{\pi}(p_m)W_{\widetilde{\pi}}^0.\]
\end{proof}

We will make use of further information about the essential Whittaker function which can be found in 
\cite{mat13,miy14}. More precisely, in \cite{mat13}, if $\pi$ is ramified (i.e. $f(\pi)>0$), an unramified standard module $\pi_u$ of $G_r(E)$ with $r< n$, is associated to $\pi$ as follows. 

To say that 
\[\pi=[\nu_E^{1-a_1}\rho_1(\pi),\dots,\rho_1(\pi)]\times \dots \times [\nu_E^{1-a_t}\rho_t(\pi),\dots,\rho_t(\pi)]\]
is ramified is to say that there is at least one $i$ such that either $a_i> 1$ or $\rho_i(\pi)$ is not an unramified character of $GL(1,E)$. Now assume that $\pi$ is a ramified generic representation of $G_n(E)$ of the above form. Denote by $U(\pi)$ the subset of $\{\rho_1(\pi),\dots,\rho_t(\pi)\}$ consisting of the $\rho_i$'s which are unramified characters of $GL(1,E)$, and denote by $\chi_1,\dots,\chi_r$ its (maybe equal) elements ordered such that 
$\mathrm{Re}(\chi_{i})\geq \mathrm{Re}(\chi_{i+1})$. By definition, we set 
$$\pi_u=\chi_1\times \dots \times \chi_r.$$

The main result of \cite{mat13} is the following formula for the restriction of the essential vector $W_{\pi}^0$ to the diagonal torus $A_{n-1}(E)$ of $G_{n-1}(E)$. 

\begin{theorem}\label{essentialformula} 
Let $\pi$ be an irreducible ramified generic representation of $G_n(E)$. Let 
\[a=diag(a_1,\dots, a_{n-1})\in A_{n-1}(E)\] and 
\[a^\prime=diag(a_1,\dots, a_r)\in A_r(E).\] 
Then we have
\[W_\pi^0(diag(a,1))=W_{\pi_u}^0(a^\prime)\nu_E(a^\prime)^{(n-r)/2}1_{\o_E}(a_r)\prod_{r<i<n}1_{\o_E^*}(a_i).\]
\end{theorem}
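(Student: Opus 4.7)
The plan is to appeal to the Rankin-Selberg characterization from \cite{jps81}: $W_\pi^0 \in W(\pi,\psi)$ is uniquely determined (up to the right $H_n(f(\pi))$-invariance) by the identities
\[I(s, W_\pi^0, W_{\pi'}^0) = L(s, \pi \times \pi')\]
as $\pi'$ runs over the unramified standard modules of $G_{n-1}(E)$. Accordingly, I would define a candidate Whittaker function on $G_n(E)$ by prescribing its values on $\mathrm{diag}(a,1)$ via the right-hand side of the theorem, extend it by left $(N_n(E),\psi)$-equivariance and right $K_{n-1}\times\{1\}$-invariance, and then verify that it has the correct Rankin-Selberg integrals against every such $\pi'$.

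The first reduction is to the torus. Using $G_{n-1}(E) = N_{n-1}(E) A_{n-1}(E) K_{n-1}$ with $K_{n-1} = G_{n-1}(\o_E)$, and using that $H_n(f(\pi))$ contains the embedded $K_{n-1}\times\{1\}$, the map $a \mapsto W_\pi^0(\mathrm{diag}(a,1))$ descends to $A_{n-1}(E)/A_{n-1}(\o_E) \simeq \Z^{n-1}$; since $W_{\pi'}^0$ is itself right $K_{n-1}$-invariant, the Rankin-Selberg integral becomes a Mellin-type sum on $\Z^{n-1}$ with weight $\delta_{B_{n-1}}^{-1}\nu_E^{s-1/2}$. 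Specializing $\pi'$ to products $\chi_1'\times\cdots\times\chi_{n-1}'$ of unramified characters and applying the Casselman-Shalika formula to $W_{\pi'}^0$, Mellin inversion in the $n-1$ spectral variables $\chi_i'(\w_E)$ recovers $W_\pi^0(\mathrm{diag}(a,1))$ from the resulting family of $L$-values.

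On the $L$-function side, Rankin-Selberg multiplicativity combined with the triviality of Godement-Jacquet factors for ramified supercuspidals and for segments $[\nu_E^{1-k}\rho,\ldots,\rho]$ with either $k>1$ or $\rho$ not an unramified character of $GL(1,E)$ gives the key reduction
\[L(s, \pi \times \pi') = L(s, \pi_u \times \pi')\]
for every unramified standard module $\pi'$ of $G_{n-1}(E)$: only the unramified character components collected in $\pi_u$ contribute to the $L$-factor. Applying the same Mellin inversion to the known identity $I(s, W_{\pi_u}^0, W_{\pi'}^0) = L(s, \pi_u \times \pi')$ in $r$ variables then yields the value $W_{\pi_u}^0(a')$ on the first $r$ torus coordinates.

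The technical heart of the argument is identifying the precise correction factor $\nu_E(a')^{(n-r)/2}\,1_{\o_E}(a_r)\prod_{r<i<n} 1_{\o_E^*}(a_i)$. The shift $\nu_E(a')^{(n-r)/2}$ arises from matching the modulus characters $\delta_{B_{n-1}}$ and $\delta_{B_r}$ appearing in the Iwasawa decompositions on the two ambient groups $G_{n-1}(E)$ and $G_r(E)$. The indicator functions on the remaining coordinates reflect the fact that the pair $(\pi,\pi')$ contributes only polynomial (rather than rational) factors in those spectral directions, so inverse Mellin transform forces the torus-restriction of $W_\pi^0$ to vanish unless $a_r \in \o_E$ and $a_{r+1},\ldots,a_{n-1} \in \o_E^*$. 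Carrying out this combinatorial matching carefully through the Casselman-Shalika Weyl-symmetrization of $W_{\pi'}^0$ is the delicate step; it is performed in \cite{mat13}, with a different approach in \cite{miy14}.
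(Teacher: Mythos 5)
Note first that the paper does not prove Theorem \ref{essentialformula}; it is quoted as the main result of \cite{mat13}, with \cite{miy14} cited as an alternative source. There is therefore no internal argument in this paper against which to compare, and I can only assess your sketch on its own terms.

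Your plan --- characterize $W_\pi^0$ by the relations $I(s,W_\pi^0,W_{\pi'}^0)=L(s,\pi\times\pi')$ over all unramified standard modules $\pi'$ of $G_{n-1}(E)$, expand the Rankin--Selberg integral over the torus via the Iwasawa decomposition and the Casselman--Shalika formula, then invert --- is a sound outline and is essentially the route of \cite{miy14}. The two computations you isolate, namely the reduction $L(s,\pi\times\pi')=L(s,\pi_u\times\pi')$ and the modulus-character bookkeeping $\delta_{B_{n-1}}\leftrightarrow\delta_{B_r}$ that produces $\nu_E(a')^{(n-r)/2}$, are indeed where the content lies, and your reading of the indicator functions as reflecting which spectral directions contribute only polynomially is also correct in spirit.

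There is, however, one genuine error in the mechanism you give for the $L$-function reduction. You assert triviality of the Godement--Jacquet factor for a segment $\delta=[\nu_E^{1-k}\rho,\dots,\rho]$ whenever $k>1$, regardless of $\rho$. This is false precisely in the case that matters: when $\rho=\chi$ is an \emph{unramified} character of $E^\times$ and $k>1$ (a ramified Steinberg twist), one has $L(s,\delta\times\chi')=L(s,\chi\chi')\neq 1$ for unramified $\chi'$. In general $L(s,\delta\times\chi')=L(s,\rho\times\chi')$, the factor attached to the top of the segment; it is trivial precisely when $\rho$ is a ramified character of $E^\times$ or a supercuspidal of $G_r(E)$ with $r>1$. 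This is exactly why the definition of $\pi_u$ in \cite{mat13} collects the top $\rho_i$ of \emph{every} segment whose $\rho_i$ is an unramified character of $E^\times$, not only the length-one ones. Taken literally, your asserted triviality would yield the wrong $\pi_u$: for $\pi=[\nu_E^{-1}\chi,\chi]$ on $G_2(E)$ with $\chi$ unramified, it would predict $L(s,\pi\times\chi')=1$ and $\pi_u$ empty, whereas $\pi_u=\chi$ and $L(s,\pi\times\chi')=L(s,\chi\chi')$. With this corrected the reduction does hold and the remainder of your outline stands, though you (rightly) defer the actual Mellin-inversion combinatorics to \cite{mat13,miy14}.
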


\begin{remark}\label{rem-essential}
The essential vector of Jacquet, Piatetski-Shapiro and Shalika plays a fundamental role in number theory via the theory of \textit{newforms} for $GL_n$. Thus, an explicit understanding of its properties, a few of which are covered in this section, may be of independent interest, and can have potential applications in number theory. In this context, we refer to \cite[\S 7]{ven06},  where the $L^2$-norm of the essential vector is computed (in a very elegant way by appealing to invariant theory!), with applications to analytic number theory. 
\end{remark}

\section{Distinguished representations}\label{sec-dist}

We recall that an irreducible admissible representation $\pi$ of $G_n(E)$ is distinguished with respect to $G_n(F)$ if the space 
\[{\rm Hom}_{G_n(F)}(\pi,1)\]
of $G_n(F)$-invariant linear forms on $\pi$ is not zero. 

First we have the following basic result due to Flicker \cite[Propositions 11 \& 12]{fli91}.

\begin{proposition}\label{basic}
If $\pi$ is an irreducible admissible representation of $G_n(E)$ which is $G_n(F)$-distinguished then \[\dim {\rm Hom}_{G_n(F)}(\pi,1) = 1,\] and
moreover $\widetilde{\pi}\simeq \pi^\sigma$.
\end{proposition}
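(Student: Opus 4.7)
This is a classical result of Flicker, established via the Gelfand--Kazhdan method; I would reproduce his argument in outline.

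The central technical input is a geometric lemma: every distribution on $G_n(E)$ that is bi-invariant under translations by $G_n(F)$ is automatically invariant under the anti-involution
\[
\tau\colon g \longmapsto \sigma({}^tg),
\]
which preserves $G_n(F)$ as a set, since $\sigma$ is trivial on $G_n(F)$ and transposition stabilizes $G_n(F)$. I would prove this by analyzing the orbits of $G_n(F) \times G_n(F)$ acting on $G_n(E)$ via $(h_1, h_2) \cdot g = h_1 g h_2^{-1}$: a Bruhat-style stratification together with Hilbert 90 (i.e., $H^1(\mathrm{Gal}(E/F), G_r(E)) = 0$) parametrizes the orbits in terms of Hermitian-type double cosets, and one verifies $\tau$-stability orbit by orbit. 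This orbit analysis is the main obstacle and the substantive content of the argument; everything that follows is formal.

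Granting the geometric lemma, the Gelfand trick yields multiplicity one. Suppose $\ell_1, \ell_2 \in \mathrm{Hom}_{G_n(F)}(\pi,1)$ are both nonzero. Using matrix coefficients of $\pi$ and $\widetilde{\pi}$ one pairs them into a bi-$G_n(F)$-invariant distribution on $G_n(E)$, i.e., a functional on $\mathcal{C}_c^\infty(G_n(E))$ built from the functions $g \mapsto \ell_1(\pi(g)v)\,\langle \widetilde{v},\pi(g^{-1})v\rangle$. Its $\tau$-invariance, combined with the Gelfand--Kazhdan theorem $\widetilde{\pi} \simeq \pi^\theta$ for $\theta(g) = {}^tg^{-1}$, forces $\ell_1$ and $\ell_2$ to be proportional.

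The self-duality $\widetilde{\pi} \simeq \pi^\sigma$ comes out of the same machinery. First, $\pi^\sigma$ is itself distinguished, because $\sigma$ is trivial on $G_n(F)$, so $\ell$ remains $G_n(F)$-invariant when viewed as a linear form on the space of $\pi^\sigma$. The anti-involution $\tau$ factors as $\tau = \sigma \circ t$ with $t(g) = {}^tg$, and the $\tau$-invariance of the distribution built from $\ell$ translates, via the Gelfand--Kazhdan identification $\widetilde{\pi} \simeq \pi^\theta$, into a nonzero $G_n(E)$-equivariant intertwiner between $\widetilde{\pi}$ and $\pi^\sigma$. By irreducibility of both representations and Schur's lemma, this intertwiner is an isomorphism, so $\widetilde{\pi} \simeq \pi^\sigma$.
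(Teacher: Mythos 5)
The paper does not prove this statement itself; it cites Flicker \cite[Propositions 11 \& 12]{fli91}, and your outline is indeed a reproduction of Flicker's Gelfand--Kazhdan argument, so the approach is the same. The anti-involution $\tau(g)=\sigma({}^tg)$, the Bruhat-style stratification of the $G_n(F)\times G_n(F)$-orbits on $G_n(E)$ together with vanishing of $H^1(\mathrm{Gal}(E/F),G_r)$, and the use of $\widetilde\pi\simeq\pi^\theta$ with $\theta(g)={}^tg^{-1}$ from \cite{gk75} to convert $\tau$-invariance into the self-duality statement are exactly the ingredients in \cite{fli91}.

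One local slip worth flagging: the specific formula you give for the bi-$G_n(F)$-invariant distribution is not actually bi-invariant. The function $g\mapsto \ell_1(\pi(g)v)\,\langle\widetilde v,\pi(g^{-1})v\rangle$ is invariant under neither left nor right translation by $G_n(F)$, since the matrix-coefficient factor $\langle\widetilde v,\pi(g^{-1})v\rangle$ carries no $G_n(F)$-invariance. The construction that does work takes $\ell_1\in\mathrm{Hom}_{G_n(F)}(\pi,1)$ together with $\ell_2\in\mathrm{Hom}_{G_n(F)}(\widetilde\pi,1)$ (the latter nonzero because $\widetilde\pi\simeq\pi^\theta$ is distinguished whenever $\pi$ is, as transposition preserves $G_n(F)$), and sets
\[
T(f)=(\ell_1\otimes\ell_2)\bigl(\pi(f)\bigr),\qquad f\in\mathcal C_c^\infty(G_n(E)),
\]
using that the finite-rank operator $\pi(f)$ lies in $\pi\otimes\widetilde\pi$; bi-invariance of $T$ then follows from $\pi(L_{h_1}R_{h_2}f)=\pi(h_1)\pi(f)\pi(h_2)^{-1}$. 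With this correction the rest of your Gelfand-trick reasoning, and the derivation of $\widetilde\pi\simeq\pi^\sigma$, go through as you describe.
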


As a consequence of the above result we deduce the following useful corollary.

\begin{corollary}\label{obvious}
Let $\pi$ be an irreducible admissible generic representation of $G_n(E)$ which is distinguished with respect to $G_n(F)$. Then we have $(\widetilde{\pi})_u \simeq \pi_u$.
\end{corollary}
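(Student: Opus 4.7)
The plan is to pass to the Galois twist $\pi^\sigma$ and exploit the fact that unramified characters of $E^\times$ are $\sigma$-invariant. By Proposition \ref{basic}, the distinction hypothesis provides $\widetilde{\pi} \simeq \pi^\sigma$, hence $(\widetilde{\pi})_u \simeq (\pi^\sigma)_u$, so it suffices to prove $(\pi^\sigma)_u \simeq \pi_u$.

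To compute $(\pi^\sigma)_u$, I would first track how $\sigma$ acts on the Langlands--Zelevinsky decomposition. Writing
\[\pi = [\nu_E^{1-a_1}\rho_1(\pi),\dots,\rho_1(\pi)] \times \cdots \times [\nu_E^{1-a_t}\rho_t(\pi),\dots,\rho_t(\pi)]\]
as a product of pairwise unlinked essentially square integrable representations, and using that $\nu_E^\sigma = \nu_E$ (since $|\cdot|_E$ is $\sigma$-invariant), one obtains
\[\pi^\sigma = [\nu_E^{1-a_1}\rho_1(\pi)^\sigma,\dots,\rho_1(\pi)^\sigma] \times \cdots \times [\nu_E^{1-a_t}\rho_t(\pi)^\sigma,\dots,\rho_t(\pi)^\sigma].\]
The preceding relation, being defined purely in terms of $\nu_E$-twists, is $\sigma$-equivariant, so the $\sigma$-twisted segments remain pairwise unlinked, and by uniqueness of the decomposition the multiset $\{\rho_i(\pi^\sigma)\}$ equals $\{\rho_i(\pi)^\sigma\}$.

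The heart of the matter is then the observation that any unramified character $\chi$ of $E^\times$ satisfies $\chi^\sigma = \chi$: by definition $\chi|_{\o_E^\times} = 1$, so $\chi$ factors through the normalized valuation $v_E : E^\times \to \Z$, and since $\sigma$ preserves both $\o_E$ and $\p_E$ we have $v_E \circ \sigma = v_E$. Consequently, $\rho_i(\pi)$ is an unramified character of $GL_1(E)$ if and only if $\rho_i(\pi^\sigma)=\rho_i(\pi)^\sigma$ is, and in that case the two agree. Therefore the multisets $U(\pi)$ and $U(\pi^\sigma)$ coincide, with the same ordering by decreasing real part, so $(\pi^\sigma)_u \simeq \pi_u$, which completes the argument. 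I anticipate no substantive obstacle; the only verification worth flagging is the $\sigma$-equivariance of the preceding relation, which is immediate from $\nu_E^\sigma = \nu_E$.
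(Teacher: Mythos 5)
Your proof is correct and follows essentially the same route as the paper: both pass to $\pi^\sigma$ via Proposition \ref{basic}, observe that the Zelevinsky decomposition is Galois-equivariant (so passing to $\pi_u$ commutes with $\sigma$-twisting), and conclude using the fact that unramified characters of $E^\times$ are $\sigma$-invariant. The paper compresses this into the chain $(\widetilde{\pi})_u \simeq (\pi^\sigma)_u \simeq (\pi_u)^\sigma \simeq \pi_u$; you have simply spelled out the details behind the middle isomorphism, which the paper leaves as ``clear from the definition.''
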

\begin{proof}
It is clear from the definition of $\pi_u$ that $(\pi^\sigma)_u \simeq (\pi_u)^\sigma$. 
As any unramified character of $E^*$ is fixed by $\sigma$, the relation $\widetilde{\pi} \simeq \pi^\sigma$ implies that 
\[(\widetilde{\pi})_u \simeq (\pi^\sigma)_u \simeq (\pi_u)^\sigma \simeq \pi_u.\]
\end{proof}

We now recall a result due to Youngbin Ok \cite[Theorem 3.1.2]{ok97}. We refer to \cite[Proposition 2.1]{mat10} and \cite[Theorem 3.1]{off11} for published proofs of Ok's result. When $\pi$ is tempered, another approach to Ok's theorem is given also in \cite[Theorem 1.1]{akt04}.

\begin{proposition}\label{P-invariance}
Let $\pi$ be an irreducible admissible representation of $G_n(E)$ which is distinguished with respect to $G_n(F)$. Then, 
\[{\rm Hom}_{G_n(F)}(\pi,1)={\rm Hom}_{P_n(F)}(\pi,1).\]
\end{proposition}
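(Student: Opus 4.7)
The containment ${\rm Hom}_{G_n(F)}(\pi,1) \subseteq {\rm Hom}_{P_n(F)}(\pi,1)$ is automatic from $P_n(F)\subseteq G_n(F)$, and by Proposition \ref{basic} the left-hand side is one-dimensional. Consequently the proposition reduces to the upper bound $\dim {\rm Hom}_{P_n(F)}(\pi,1) \leq 1$; establishing this bound, together with the existence of a nonzero $G_n(F)$-invariant form, forces equality of the two Hom spaces.

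My plan is to extract the bound from the Bernstein-Zelevinsky theory of derivatives applied to $\pi$ viewed as a $P_n(E)$-module. The canonical $P_n(E)$-equivariant filtration of $\pi|_{P_n(E)}$ has successive quotients of the form $(\Psi^+)^{k-1}\Phi^+\bigl(\pi^{(k)}\bigr)$ for $k=1,\dots,n$, where $\pi^{(k)}$ denotes the $k$-th Bernstein-Zelevinsky derivative of $\pi$ (a smooth representation of $G_{n-k}(E)$) and $\Phi^+$, $\Psi^+$ are the usual creation functors. Restricting the whole filtration to $P_n(F)$ and applying the left-exact functor ${\rm Hom}_{P_n(F)}(-,1)$, the desired bound splits into two separate tasks: first, that the piece indexed by $k=n$ contributes at most one dimension of $P_n(F)$-invariants, and second, that
\[{\rm Hom}_{P_n(F)}\bigl((\Psi^+)^{k-1}\Phi^+(\pi^{(k)}),1\bigr)=0 \quad \text{for every } 1 \leq k < n.\]
The first task is easy: $\pi^{(n)}$ is at most one-dimensional (equal to the space of $(N_n(E),\psi)$-Whittaker functionals on $\pi$), and when it is nonzero the $P_n(F)$-invariant functional it produces is precisely the mirabolic integral $W\mapsto \int_{N_n(F)\backslash P_n(F)} W(p)\,dp$ on the Kirillov model.

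For the second, pivotal task I would use a Mackey-style double-coset analysis. The functor $(\Psi^+)^{k-1}\Phi^+$ is a compact induction from a standard subgroup $Q_k \subset P_n(E)$, so by Frobenius reciprocity applied to the orbit decomposition of $P_n(F)$ acting on $P_n(E)/Q_k$, the space of $P_n(F)$-invariants on $(\Psi^+)^{k-1}\Phi^+(\pi^{(k)})$ is governed by Hom spaces of the form ${\rm Hom}_{H_k}\bigl(\pi^{(k)},\chi_k\bigr)$, where $H_k$ is a subgroup of $G_{n-k}(F)$ attached to an orbit and $\chi_k$ is a character whose restriction to the relevant unipotent radical is a nontrivial power of the modulus. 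The contribution of the unique open (or large enough) orbit needs to be shown to vanish, and all the closed orbits handled by the same mechanism.

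The main obstacle is the vanishing of these $(H_k,\chi_k)$-equivariant Hom spaces, and I would approach it by induction on $n$. Every irreducible subquotient of $\pi^{(k)}$ is an admissible representation of a smaller general linear group to which both the induction hypothesis and Proposition \ref{basic} apply; the conjugate self-duality $\widetilde{\pi}\simeq \pi^\sigma$ then translates, via the Leibniz-type formula for derivatives of induced representations, into constraints on the central characters of the subquotients of $\pi^{(k)}$ that are incompatible with the nontrivial modulus twist carried by $\chi_k$. A careful bookkeeping of these central characters rules out all intermediate contributions, leaving only the $k=n$ piece as a possible source of $P_n(F)$-invariants, whence the required one-dimensionality.
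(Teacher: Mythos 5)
The paper itself offers no proof here; it attributes the result to Ok \cite{ok97} and points to \cite{mat10}, \cite{off11}, and (for tempered $\pi$) \cite{akt04} for proofs, so I can only assess your argument on its own terms. Your logical reduction is fine: since Proposition \ref{basic} gives $\dim{\rm Hom}_{G_n(F)}(\pi,1)=1$ and this space sits inside ${\rm Hom}_{P_n(F)}(\pi,1)$, the proposition is equivalent to the bound $\dim{\rm Hom}_{P_n(F)}(\pi,1)\leq 1$, and the Bernstein--Zelevinsky filtration of $\pi|_{P_n(E)}$ is the right object to look at. (A notational slip: the successive quotients are $(\Phi^+)^{k-1}\Psi^+\bigl(\pi^{(k)}\bigr)$, not $(\Psi^+)^{k-1}\Phi^+\bigl(\pi^{(k)}\bigr)$; the composition you wrote is not even well-defined for $k>2$, since $\Phi^+$ lands in $P$-representations and $\Psi^+$ expects a $G$-representation.)

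The pivotal claim, that ${\rm Hom}_{P_n(F)}\bigl((\Phi^+)^{k-1}\Psi^+(\pi^{(k)}),1\bigr)=0$ for every $1\leq k<n$, is false, and the failure is not peripheral. Take $\pi=1_{G_n(E)}$ with $n\geq 2$: it is distinguished, it is not generic, so $\pi^{(n)}=0$; in fact $\pi^{(k)}=0$ for all $k\geq 2$, and $\pi|_{P_n(E)}=\Psi^+\bigl(\pi^{(1)}\bigr)=1_{P_n(E)}$. The one-dimensional space ${\rm Hom}_{P_n(F)}(\pi,1)=\mathbb C$ therefore lives entirely on the $k=1$ layer, exactly the layer you declare to be killed. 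Your plan also tacitly assumes genericity (you invoke the Kirillov model and the mirabolic integral as if they are always available), whereas the proposition covers all irreducible admissible distinguished $\pi$; for non-generic $\pi$ the top layer vanishes and some intermediate layer must carry the invariant form. What Ok's theorem actually requires is the sharper statement that \emph{exactly one} layer contributes and that it contributes exactly one dimension, with the index of the contributing layer depending on $\pi$. The modulus-versus-central-character incompatibility you invoke is the right kind of tool for eliminating specific layers once one knows which one survives, but it cannot, as you assert, kill all of $1\leq k<n$ uniformly, and without a much more careful case analysis the uniqueness of the surviving layer is not established.
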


Now let $\pi$ be an irreducible unitary generic representation of $G_n(E)$. Let $\psi$ be a non-trivial character of $E$ which is trivial on $F$. Let $\ell$ and $\ell^\prime$ be the linear forms on $W(\pi,\psi)$ defined by 
\[\ell:W\mapsto I_{(0)}(1,W)=\int_{N_{n-1}(F)\backslash G_{n-1}(F)} W(h)dh,\]
and 
\[\ell^\prime:W\mapsto I_{(0)}(1,\widetilde{W})=\int_{N_{n-1}(F)\backslash G_{n-1}(F)} \widetilde{W}(h)dh.\]
Both these integrals are convergent according to a result of Flicker \cite[\S 4]{fli88}), and thus the linear forms above are well defined. Moreover, as $W(\pi,\psi)$ and 
$W(\widetilde{\pi},\psi^{-1})$ contain respectively ${\rm ind}_{N_n}^{G_n}(\psi)$ and ${\rm ind}_{N_n}^{G_n}(\psi^{-1})$ according to \cite[\S 5.15 \& \S 5.16]{bz76}, the linear forms $\ell$ and $\ell^\prime$ are nonzero. Observe also that as $\ell$ is naturally $P_n(F)$-invariant whereas $\ell^\prime$ is $^t\! P_n(F)$-invariant, and therefore if $\pi$ is distinguished (hence $\widetilde{\pi}$ as well), then according to Proposition \ref{P-invariance}, they are $G_n(F)$-invariant. By the multiplicity one result for invariant linear functionals, together with some further easy computations, Offen shows that the above observation has the consequence that these two linear forms differ by a non-zero scalar which depends only on the representation $\pi$. We refer to Corollary 4.1 and Remark 3 just after it in \cite{off11} for further details (see also \cite[Theorem 1.3]{akt04}). 

\begin{proposition}\label{constant-pi}
Let $\pi$ be an irreducible admissible generic representation of $G_n(E)$ which is distinguished with respect to $G_n(F)$. Then there exists a non-zero constant $c(\pi)\in \C \backslash \{0\}$, independent of $\psi$, such that $\ell^\prime = c(\pi)\ell$.
\end{proposition}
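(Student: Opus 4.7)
The plan is to deduce Proposition \ref{constant-pi} from multiplicity one (Proposition \ref{basic}). The excerpt already establishes that $\ell$ is a nonzero $G_n(F)$-invariant linear form on $W(\pi,\psi)$, and that $\ell'$ is nonzero and ${}^t\!P_n(F)$-invariant by construction. The crux is to upgrade the latter to $G_n(F)$-invariance, after which the one-dimensionality of ${\rm Hom}_{G_n(F)}(\pi,\C)$ produces the desired scalar $c(\pi)\in\C\setminus\{0\}$ with $\ell'=c(\pi)\ell$.

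To promote $\ell'$ to $G_n(F)$-invariance, I would first observe that $\widetilde{\pi}$ is itself distinguished: by Proposition \ref{basic}, $\widetilde{\pi}\simeq\pi^\sigma$, and since $\sigma$ fixes $G_n(F)$ pointwise, any $G_n(F)$-invariant form on $\pi$ transports to one on $\pi^\sigma$. Thus Proposition \ref{P-invariance} applied to $\widetilde{\pi}$ ensures that the linear form
\[\widetilde{\ell}\colon W'\mapsto \int_{N_{n-1}(F)\backslash G_{n-1}(F)} W'(h)\,dh\]
on $W(\widetilde{\pi},\psi^{-1})$ is $G_n(F)$-invariant. The isomorphism $W\mapsto\widetilde{W}$ from $W(\pi,\psi)$ onto $W(\widetilde{\pi},\psi^{-1})$ satisfies the intertwining relation $\widetilde{\pi(g)W}=\widetilde{\pi}({}^tg^{-1})\widetilde{W}$; composing $\widetilde{\ell}$ with this isomorphism recovers $\ell'$, and since $g\mapsto{}^tg^{-1}$ preserves $G_n(F)$, the $G_n(F)$-invariance of $\widetilde{\ell}$ translates into that of $\ell'$. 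Combined with the non-vanishing of $\ell'$ already noted, this yields the scalar $c(\pi)$ for the chosen $\psi$.

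For the independence of $c(\pi)$ from $\psi$, I would note that any two nontrivial characters of $E$ trivial on $F$ are related by $\psi\mapsto\psi(a\,\cdot)$ with $a\in F^\times$ (any such $a$ must lie in $F^\times$, else $\psi$ would be trivial on an $F$-basis of $E$, hence trivial). The induced isomorphism of Whittaker models is $W\mapsto W_a$, $W_a(g)=W(t_ag)$, with $t_a=\mathrm{diag}(a^{n-1},\dots,a,1)\in G_n(F)$. Because $t_a$ lies in $G_n(F)$, where the abstract distinguishing form is invariant, unfolding shows that the pullbacks of $\ell$ and $\ell'$ under this transport are both scaled by the same factor, leaving $c(\pi)$ unchanged. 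The only nontrivial input in this whole proof is Ok's theorem (Proposition \ref{P-invariance}); the main step requiring care is verifying the intertwining identity for $W\mapsto\widetilde{W}$ to correctly transfer invariance from $\widetilde{\ell}$ to $\ell'$, while the $\psi$-independence is a routine bookkeeping calculation.
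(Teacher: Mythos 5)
Your argument is correct and follows essentially the same route as the paper, which (after noting that $\ell$ is $P_n(F)$-invariant, $\ell'$ is ${}^tP_n(F)$-invariant, and $\widetilde\pi$ is also distinguished so that Proposition \ref{P-invariance} upgrades both to full $G_n(F)$-invariance) simply cites Offen and \cite{akt04} for the multiplicity-one deduction and the $\psi$-independence. The one point worth flagging in your sketch of the $\psi$-independence is that the two scaling factors coincide only because $\omega_\pi$ is trivial on $F^\times$ (a consequence of distinction), which is the precise reason why $t_a\in G_n(F)$ matters in that computation.
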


\begin{remark}\label{rmk1}
According to \cite[Corollary 7.2]{off11}, the constant $c(\pi)$ in Proposition \ref{constant-pi} is equal to $1$. However the proof in \cite{off11} relies on the fact that the gamma factor at $s=1/2$ in Theorem 0.1 of [ibid] is $1$ for an irreducible admissible distinguished representation, but this fact is not true in general (see \cite{mo16} for further explanations). Hence the proof in Offen's work gives $c(\pi)=1$ for relatively cuspidal representations \cite[Corollary 6.1]{off11}. It would extend using the results of \cite{mo16}, as soon as one knows that relatively cuspidal representations are always tempered, which is expected to be true.
\end{remark}

\begin{remark}\label{rmk2}
As mentioned in the introduction, the fact that $c(\pi)$ is always equal to $1$, i.e., $\ell^\prime = \ell$, can be thought of as a new type of functional equation (cf. Theorem \ref{constant}). This and similar other functional equations are important and appear in the recent works of Lapid and Mao \cite{lm14,lm15,lm15b}, and also in \cite{mor16}. In particular, Theorem \ref{constant} is used and referred to as 
\cite[Lemma 1.1]{lm14} in their work. The main result of \cite{lm14} deals with a similar proportionality constant in a more general situation involving distinguished representations. See also \cite{lm15} for results and conjectures about a proportionality constant arising out of the multiplicity one result for Whittaker functionals. A consequence of their work on \textit{model transitions} is a functional equation which is in the spirit of Theorem \ref{constant} of this paper \cite[Proposition 3.9]{lm15b}.
\end{remark}

In the next section we will give a simple proof of the fact that the constant $c(\pi)$ is always $1$ as a consequence of our explicit computation of local periods on appropriate test vectors (cf. Theorem \ref{constant}). Its proof will also make use of \cite[Theorem 3.6]{mo16} which is about the epsilon factor of a $G_n(F)$-distinguished representation of $G_n(E)$ (cf. Remark \ref{rem-mo}). 

\section{Explicit test vectors and computation of local periods}\label{sec-test}

In this section we show that the essential vector of an irreducible unitary generic representation $\pi$ of $G_n(E)$ is a test vector for the linear form given on the Whittaker model of $\pi$ by
\[\ell(W)=\int_{N_{n-1}(F)\backslash G_{n-1}(F)} W\left(\begin{array}{cc}g & 0 \\ 0 & 1 \end{array} \right)dg.\]
In fact, we show that the value of the above linear form on the essential vector is related to a a certain $L$-value.

\subsection{Test vectors}

In this section, we prove Theorem \ref{1}, under the assumption that the irreducible admissible representation $\pi$ of $G_n(E)$ is unitary.

We start by appealing to Lemma \ref{level0} to choose a character $\psi$ of $E$ of conductor $0$ which is trivial on $F$. Now let $\pi$ be an irreducible unitary generic representation and let $W_\pi^0$ be the essential vector in $W(\pi,\psi)$. We compute $\ell(W_\pi^0)$ (whether 
$\pi$ is distinguished with respect to $G_n(F)$ or not). 

The computation when $\pi$ is ramified 
is slightly different from that when $\pi$ is unramified. We start with the ramified case.

\begin{theorem}\label{ramified-computation}
Let $\pi$ be an irreducible generic unitary representation of $G_n(E)$ which is ramified. Then we have \[\ell(W_\pi^0)=L(1,\pi_u, As) \neq 0.\]
\end{theorem}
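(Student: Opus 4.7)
The plan is to evaluate $\ell(W_\pi^0)$ by unfolding along the Iwasawa decomposition, substituting the explicit torus formula of Theorem \ref{essentialformula}, and recognizing the result as Flicker's Asai integral for the unramified standard module $\pi_u$ at $s=1$. First, the upper-left block embedding $h \mapsto \mathrm{diag}(h, 1)$ sends $G_{n-1}(\o_E)$ into the congruence subgroup $H_n(f(\pi))$, so the restriction $h \mapsto W_\pi^0(\mathrm{diag}(h,1))$ is right $G_{n-1}(\o_E)$-invariant, and in particular right $G_{n-1}(\o_F)$-invariant. Using the Iwasawa decomposition $G_{n-1}(F) = N_{n-1}(F)\,A_{n-1}(F)\,G_{n-1}(\o_F)$ together with the normalization giving $G_{n-1}(\o_F)$ volume one, this reduces $\ell(W_\pi^0)$ to
\[
\int_{A_{n-1}(F)} W_\pi^0(\mathrm{diag}(a, 1))\,\delta_{B_{n-1}}^{-1}(a)\,d^\times a.
\]

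Substituting Theorem \ref{essentialformula}, and using $|a_i|_E = |a_i|_F^2$, $\o_E^\times \cap F^\times = \o_F^\times$ and $\o_E \cap F = \o_F$ for $a_i \in F^\times$, the product $\nu_E(a')^{(n-r)/2}\,\delta_{B_{n-1}}^{-1}(a)$ simplifies to a monomial in the $|a_i|_F$ with exponent $2i - r$ for $i \leq r$ and exponent $2i - n$ for $r < i < n$. Each coordinate $a_i$ with $r < i < n$ is constrained to $\o_F^\times$, where $|a_i|_F = 1$, so its integral contributes $\mathrm{vol}(\o_F^\times) = 1$. The surviving integral is
\[
\int_{A_r(F)} W_{\pi_u}^0(a)\,1_{\o_F}(a_r)\prod_{i=1}^r |a_i|_F^{2i - r}\,d^\times a,
\]
which one recognizes as the Iwasawa expansion of $I(1, W_{\pi_u}^0, \Phi_0)$ with $\Phi_0 = 1_{\o_F^r}$, using the fact that $\Phi_0(\e_r ak) = 1_{\o_F}(a_r)$ for $k \in G_r(\o_F)$ and the right $G_r(\o_F)$-invariance of the spherical vector $W_{\pi_u}^0$. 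By Flicker's formula recalled in the remark following Theorem \ref{multiplicativity-standard}, extended to the (possibly reducible) unramified standard module $\pi_u$ via Theorem \ref{multiplicativity-standard}, this last quantity equals $L(1, \pi_u, As)$.

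For the non-vanishing, every Asai $L$-factor is of the form $P(q_F^{-s})^{-1}$ with $P$ a polynomial satisfying $P(0)=1$, hence nowhere zero on its domain; the absolute convergence of $\ell(W_\pi^0)$ for unitary $\pi$, recalled from Flicker's work, forces $L(1, \pi_u, As)$ to be finite and therefore nonzero. The main technical hurdle is the meticulous bookkeeping in the substitution step: one must verify that the exponents of $|a_i|_F$ arising from $\delta_{B_{n-1}}^{-1}$, from the twist $\nu_E(a')^{(n-r)/2}$, and from the drop in ambient rank from $n-1$ down to $r$ align exactly to recover $\delta_{B_r}^{-1}(a)\,\nu_F(a)$ in the residual integral, so that no spurious normalization factor remains; a secondary point to verify is that Flicker's unramified identity applies equally well to the standard-module $\pi_u$, which follows from Theorem \ref{multiplicativity-standard} since both sides multiply accordingly.
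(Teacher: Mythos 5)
Your proof follows exactly the same route as the paper's: unfold $\ell(W_\pi^0)$ by the Iwasawa decomposition using the $G_{n-1}(\o_E)$-invariance coming from $H_n(f(\pi))$, substitute the torus formula of Theorem \ref{essentialformula}, track the modular characters to recover $\delta_{B_r}^{-1}\nu_F$, recognize the result as $I(1,W_{\pi_u}^0,1_{\o_F^r})$, and invoke Flicker's unramified computation extended to standard modules. The exponent bookkeeping ($2i-r$ for $i\le r$, and the coordinates $r<i<n$ confined to $\o_F^\times$) checks out, and your explicit non-vanishing argument (inverse-polynomial $L$-factor plus convergence for unitary $\pi$) supplies a detail the paper leaves implicit.
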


\begin{proof}
In agreement with our normalization convention of Haar measures, we can choose the measure  $da=da_1\dots da_{n-1}$ on $A_{n-1}(F)$ such that $da_i(\o_F^\times)=1$, and that on $G_{n-1}(\o_F)$ such that $dk(G_{n-1}(\o_F))=1$. We have
\begin{align*}
\ell(W_\pi^0) &= \int_{N_{n-1}(F)\backslash G_{n-1}(F)} W_\pi^0 \left(\begin{array}{cc}g & 0 \\ 0 & 1 \end{array} \right) dg \\
&= \int_{A_{n-1}(F)}\int_{G_{n-1}(\o_F)} W_\pi^0 \left( \left(\begin{array}{cc} a & 0 \\ 0 & 1 \end{array} \right)k \right)\d_{B_{n-1}(F)}^{-1}(a)da ~dk \\
&= \int_{A_{n-1}(F)} W_\pi^0  \left(\begin{array}{cc} a & 0 \\ 0 & 1 \end{array} \right) \d_{B_{n-1}(F)}^{-1}(a)da .
\end{align*}
Now we proceed by invoking Theorem \ref{essentialformula}. Thus, 
\begin{align*} 
\ell(W_\pi^0) &= \int_{A_r(F)}W_{\pi_u}^0(a^\prime)\d_{B_{n-1}(F)}^{-1}  \left(\begin{array}{cc} a^\prime & 0 \\ 0 & I_{n-1-r} \end{array} \right) \nu_E(a^\prime)^{(n-r)/2}1_{\o_E}(a_r)da^\prime  \\
&= \int_{A_r(F)}W_{\pi_u}^0(a^\prime)\d_{B_r(F)}^{-1}(a^\prime)\nu_F(a^\prime)^{r-(n-1)}\nu_E(a^\prime)^{(n-r)/2}1_{\o_E}(a_r)da^\prime \\
&= \int_{A_r(F)}W_{\pi_u}^0(a^\prime)\d_{B_r(F)}^{-1}(a^\prime)\nu_F(a^\prime)1_{\o_F}(a_r)da^\prime \\
&= I(1,W_{\pi_u}^0,1_{\o_F^r}) \\
&= L(1,\pi_u, As),
\end{align*}
where the last equality follows thanks to \cite[Proposition 3]{fli88}. 

To be more precise the statement of the aforementioned proposition 
is for irreducible unramified generic representations, but its proof is valid for unramified standard modules as we now explain. 
The computation in the proof of \cite[Proposition 3]{fli88} uses Shintani's explicit formula for $W_{\pi_u}^0$, which is valid 
for any spherical Whittaker function \cite{shi76}. Moreover, if $\pi_u$ is a standard module, this will guarantee that the formula obtained by Flicker still gives the Asai $L$ function of $\pi_u$ (or equivalently of its Langlands quotient), by definition.
\end{proof}

We now do the computation when $\pi$ is unramified.

\begin{theorem}\label{unramified-computation}
Let $\pi$ be an irreducible generic unitary representation of $G_n(E)$ which is unramified. Then we have \[\ell(W_\pi^0)=\frac{L(1,\pi,As)}{L(n,1_{F^\times})}\neq 0.\]
\end{theorem}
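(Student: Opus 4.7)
The approach is to relate the period integral $\ell(W_\pi^0)=I_{(0)}(1,W_\pi^0)$ to Flicker's unramified Asai integral $I(s,W_\pi^0,1_{\o_F^n})$, which by Proposition 3 of \cite{fli88} (cf.\ the remark following Theorem \ref{multiplicativity-standard}) equals $L(s,\pi,As)$. The plan is to show that these two integrals differ by a single Tate local zeta factor which, under the triviality of $\omega_\pi|_{F^\times}$, equals $L(ns,1_{F^\times})$.

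Using the Iwasawa decomposition $G_n(F)=N_n(F)A_n(F)K$ with $K=G_n(\o_F)$, I would exploit two simplifications. First, $W_\pi^0$ is fixed by the larger compact $G_n(\o_E)\supset K$, hence right $K$-invariant. Second, for any $k\in K$ the row vector $\e_n k$ is primitive in $\o_F^n$, so $1_{\o_F^n}(a_n \e_n k)=1_{\o_F}(a_n)$ independently of $k$. The $K$-integration trivializes, giving
\[I(s,W_\pi^0,1_{\o_F^n})=\int_{A_n(F)} W_\pi^0(a)\,1_{\o_F}(a_n)\,\nu_F(a)^s\,\d_{B_n(F)}^{-1}(a)\,da.\]
I would then split off the last diagonal entry: parametrize $a=diag(a_1',\dots,a_{n-1}',1)\cdot b I_n$ with $b=a_n$, use the central character relation $W_\pi^0(a)=\omega_\pi(b)\,W_\pi^0(diag(a',1))$, and verify $\nu_F(a)=\nu_F(a')|b|_F^n$ together with $\d_{B_n(F)}^{-1}(a)=\d_{B_{n-1}(F)}^{-1}(a')\,\nu_F(a')^{-1}$. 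The powers of $|b|$ arising from $\nu_F$ and $\d_{B_n(F)}^{-1}$ combine cleanly, producing the factorization
\[I(s,W_\pi^0,1_{\o_F^n})=\left(\int_{F^\times} 1_{\o_F}(b)\,\omega_\pi(b)\,|b|_F^{ns}\,d^\times b\right)\cdot I_{(0)}(s,W_\pi^0).\]

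The bracketed integral is a Tate local zeta integral for the unramified character $\omega_\pi|_{F^\times}$, and is equal to $L(ns,\omega_\pi|_{F^\times})$. In the ambient distinguished setting of Theorem \ref{1}, the scalar matrices $F^\times I_n\subset G_n(F)$ must act trivially on the distinguishing functional, which forces $\omega_\pi|_{F^\times}=1_{F^\times}$ and hence reduces the factor to $L(ns,1_{F^\times})$. Combining with Flicker's identity then yields
\[I_{(0)}(s,W_\pi^0)=\frac{L(s,\pi,As)}{L(ns,1_{F^\times})},\]
and specializing at $s=1$ delivers the stated formula. Non-vanishing is immediate: $L(1,\pi,As)\neq 0$ for $\pi$ unitary generic, and $L(n,1_{F^\times})=(1-q_F^{-n})^{-1}\neq 0$.

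I expect no substantive obstacle; the argument amounts to a careful Iwasawa decomposition combined with the central character identity, the only external inputs being Flicker's explicit Asai formula and the triviality of $\omega_\pi|_{F^\times}$ under the distinction hypothesis underlying Theorem \ref{1}.
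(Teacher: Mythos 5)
Your argument follows the same route as the paper: normalize measures, invoke Flicker's identity $I(s,W_\pi^0,\Phi_0)=L(s,\pi,As)$ from \cite[Proposition 3]{fli88}, decompose the Asai integral by splitting off the last coordinate direction, and recognize the residual $F^\times$-integral as a Tate zeta factor. Your decomposition is carried out via Iwasawa on $A_n(F)$ and then separating the central coordinate, whereas the paper goes directly to the $P_n(F)\times F^\times\times K$ picture; these are equivalent bookkeeping choices.

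There is, however, one place where your version is actually \emph{more} careful than the paper's and exposes a genuine subtlety. Working through the change of variables correctly, the $F^\times$-integral that appears is
\[\int_{F^\times} \omega_\pi(b)\,1_{\o_F}(b)\,|b|_F^{ns}\,d^\times b = L(ns,\omega_\pi|_{F^\times}),\]
so the clean identity is
\[I(s,W_\pi^0,\Phi_0)=I_{(0)}(s,W_\pi^0)\cdot L(ns,\omega_\pi|_{F^\times}).\]
The paper silently drops $\omega_\pi(t)$ in the displayed computation and obtains $L(ns,1_{F^\times})$; this is only correct when $\omega_\pi|_{F^\times}=1_{F^\times}$. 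You notice this and close the gap by observing that distinction of $\pi$ forces $\omega_\pi|_{F^\times}$ to be trivial. That resolution is legitimate in the context of Theorem \ref{1}, where distinction is assumed, but note that the statement of Theorem \ref{unramified-computation} itself imposes no distinction hypothesis, and Remark 2 of the paper even insists distinction is not needed for the computation. As your analysis shows, without some hypothesis guaranteeing $\omega_\pi|_{F^\times}=1_{F^\times}$, the denominator in the theorem statement should really read $L(n,\omega_\pi|_{F^\times})$. So while your proof is correct (and, under the distinction hypothesis you invoke, yields exactly the paper's formula), you should flag that you are using a hypothesis slightly stronger than the bare statement of this theorem; alternatively, one can state the unconditional identity with $L(n,\omega_\pi|_{F^\times})$ and specialize. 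The non-vanishing step at the end is fine: unitarity and genericity ensure $L(1,\pi,As)$ is finite and nonzero, and $L(n,1_{F^\times})$ is an explicit nonzero constant.
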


\begin{proof}
We normalize the measure $da=da_1\dots da_n$ on $A_{n}(F)$ such that $da_i(\o_F^\times)=1$, and that on $G_n(\o_F)$ such that $dk(G_n(\o_F))=1$. Let $\Phi_0$ be the characteristic function of $\o_F^{n}$. 
By \cite[Proposition 3]{fli88}, we know that 
\[I(s,W_\pi^0,\Phi_0)=L(s,\pi,As).\]
On the other hand, we also have
\begin{align*}
I(s,W_\pi^0,\Phi_0) &= \int_{G_n(\o_F)}\int_{N_n\backslash P_n}W_\pi^0(pk)\nu_F(p)^{s-1}dp \int_{F^\times} \Phi_0((0,\dots,0,t)k)|t|_F^{ns}dt ~dk\\
&= \int_{N_n\backslash P_n}W_\pi^0(p)\nu_F(p)^{s-1}dp  \int_{F^\times} \int_{G_n(\o_F)} \Phi_0((0,\dots,0,t)k)|t|_F^{ns}dt ~dk\\
&= \int_{N_n\backslash P_n}W_\pi^0(p)\nu_F(p)^{s-1}dp  \int_{F^\times} \Phi_0(0,\dots,0,t)|t|_F^{ns}dt\\
&= I_{(0)}(s,W_\pi^0)L(ns,1_{F^\times}),
\end{align*}
where the last equality follows from Tate's thesis. Evaluating this equality at $s=1$ gives the result.
\end{proof}

\subsection{Computation of the constant $c(\pi)$}

The goal of this section is to prove Theorem \ref{2}, assuming further that the representation $\pi$ of $G_n(E)$ is unitary. As remarked before (cf. Remark \ref{rmk1}), when $\pi$ is an irreducible generic unitary representation that is relatively cuspidal, Theorem \ref{constant} is a consequence of Offen's work \cite{off11}.

Before we begin we introduce the following notation for the ease of exposition. Let
\[L^*(s,\pi,As)=
\begin{cases}
L(s,\pi_u,As) &\text{if $\pi$ is ramified,} \\
\frac{L(s,\pi,As)}{L(ns,1_{F^\times})} &\text{if $\pi$ is unramified}. 
\end{cases}
\] 
Here is the main theorem of this section.

\begin{theorem}\label{constant}
Let $\pi$ be an irreducible generic unitary representation of $G_n(E)$ which is distinguished with respect to $G_n(F)$. Then $c(\pi)=1$; i.e., $\ell^\prime = \ell$.
\end{theorem}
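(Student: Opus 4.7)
The plan is to prove $c(\pi)=1$ by evaluating both linear forms $\ell$ and $\ell'$ on the essential vector $W_\pi^0$ and showing the two values coincide. By Proposition \ref{constant-pi} we have $\ell'=c(\pi)\ell$, and Theorem \ref{ramified-computation} (resp.\ Theorem \ref{unramified-computation}) already furnishes $\ell(W_\pi^0)=L^*(1,\pi,As)\neq 0$; thus it will suffice to show that $\ell'(W_\pi^0)$ equals the same quantity.

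The computation of $\ell'(W_\pi^0)=I_{(0)}(1,\widetilde{W_\pi^0})$ will proceed via Proposition \ref{dual-of-essential}, which gives
\[\widetilde{W_\pi^0}=\epsilon(1/2,\pi,\psi)^{n-1}\,\widetilde{\pi}(p_m)W_{\widetilde{\pi}}^0,\qquad m:=f(\pi).\]
Since $\pi$ is $G_n(F)$-distinguished and $\psi$ is trivial on $F$, I invoke \cite[Theorem 3.6]{mo16} (cf.\ Remark \ref{rem-mo}) to conclude $\epsilon(1/2,\pi,\psi)=1$, which collapses the formula to $\widetilde{W_\pi^0}=\widetilde{\pi}(p_m)W_{\widetilde{\pi}}^0$. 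Hence $\ell'(W_\pi^0)=I_{(0)}(1,\widetilde{\pi}(p_m)W_{\widetilde{\pi}}^0)$.

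The core technical step will be to verify that the right translation by $p_m$ is harmless at $s=1$, namely $I_{(0)}(1,\widetilde{\pi}(p_m)W_{\widetilde{\pi}}^0)=I_{(0)}(1,W_{\widetilde{\pi}}^0)$. Writing $p_m=\mathrm{diag}(\varpi_E^m I_{n-1},1)$, one has $\mathrm{diag}(g,1)p_m=\mathrm{diag}(\varpi_E^m g,1)$ for $g\in G_{n-1}(F)$. Let $e$ denote the ramification index of $E/F$: in the unramified case $e=1$, and in the ramified case Lemma \ref{evenconductor} guarantees that $m$ is even, so in both situations $m/e\in\mathbb Z$. I would then substitute $g=\varpi_F^{-m/e}h$ with $h\in G_{n-1}(F)$ and isolate the scalar $u_m:=\varpi_E^m/\varpi_F^{m/e}\in\mathfrak{o}_E^\times$. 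Because $u_m$ is central in $G_{n-1}$, we get $\mathrm{diag}(\varpi_E^m g,1)=\mathrm{diag}(h,1)\cdot\mathrm{diag}(u_mI_{n-1},1)$, and a direct check shows $\mathrm{diag}(u_mI_{n-1},1)\in H_n(m)$; the right $H_n(m)$-invariance of $W_{\widetilde{\pi}}^0$ then absorbs this factor. The substitution contributes a Jacobian $q_F^{(m/e)(n-1)(s-1)}$ which equals $1$ at $s=1$.

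Putting the pieces together, $\ell'(W_\pi^0)=I_{(0)}(1,W_{\widetilde{\pi}}^0)=\ell(W_{\widetilde{\pi}}^0)$, and applying Theorems \ref{ramified-computation}/\ref{unramified-computation} to $\widetilde{\pi}$ yields $\ell(W_{\widetilde{\pi}}^0)=L^*(1,\widetilde{\pi},As)$; by Corollary \ref{obvious} and the definition of $L^*$ this coincides with $L^*(1,\pi,As)=\ell(W_\pi^0)$, finishing the proof. The main obstacle is the integral manipulation of the third paragraph: everything else follows almost formally from results already established in the excerpt, but the ramified case of that computation would genuinely fail without the parity input of Lemma \ref{evenconductor}, which is itself a nontrivial consequence of the epsilon factor result of \cite{mo16}.
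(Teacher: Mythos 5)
Your proof is correct and follows essentially the same route as the paper: it hinges on Proposition \ref{dual-of-essential} combined with the vanishing $\epsilon(1/2,\pi,\psi)=1$ from \cite[Theorem 3.6]{mo16}, uses Lemma \ref{evenconductor} to handle the ramified-extension case, and then closes with Theorems \ref{ramified-computation}/\ref{unramified-computation} together with Corollary \ref{obvious}. Your spelled-out substitution $g=\varpi_F^{-m/e}h$ with the unit $u_m\in\mathfrak{o}_E^\times$ absorbed by right $H_n(m)$-invariance is exactly what the paper packages as the change of variable by $p_m\in G_n(F)$ (unramified case) or by $q_l=\mathrm{diag}(\varpi_F^l,\dots,\varpi_F^l,1)\in A_n(F)$ (ramified case); the one detail you should state explicitly is that $\psi$ must also be taken of conductor zero (Lemma \ref{level0}), since the essential-vector identities you invoke require this normalization.
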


\begin{proof}
Since $c(\pi)$ does not depend on the choice of $\psi$, we can take $\psi$ to be trivial on $F$, and of conductor $0$. As mentioned earlier, such a choice is possible by Lemma \ref{level0}.

If $E/F$ is unramified, we can take $\w_E=\w_F$, and hence 
\[p_m=\left(\begin{array}{cc} \w_E^m I_{n-1} & 0 \\ 0 & 1 \end{array}\right)\in G_n(F).\] 
Therefore, by Proposition \ref{dual-of-essential}, and by observing that the epsilon factor is trivial since $\pi$ is distinguished with respect to $G_n(F)$ \cite[Theorem 3.6]{mo16}, we get
\begin{equation}\label{eq1}
\ell^\prime(W_\pi^0)=\ell(\widetilde{W_\pi^0})=\ell(\widetilde{\pi}(p_m)W_{\widetilde{\pi}}^0)= 
\ell(W_{\widetilde{\pi}}^0),
\end{equation}
where the last equality follows by a change of variable. 
But then we already know, by Theorems \ref{ramified-computation} and \ref{unramified-computation} that 
\[\ell(W_{\widetilde{\pi}}^0)=L^*(1,(\widetilde{\pi})_u, As).\]
However, note that according to Corollary \ref{obvious}, we have 
\[(\widetilde{\pi})_u=\pi_u,\] and hence 
\[\ell^\prime(W_\pi^0)=L^*(1,(\widetilde{\pi})_u, As)=L^*(1,\pi_u, As)= \ell(W_\pi^0)\neq 0,\] 
so that $\ell^\prime=\ell$, since we know that the constant $c(\pi)$ depends only on $\pi$. 

If $E/F$ is ramified, recall that by Lemma \ref{evenconductor}, the conductor of $\pi$ is even, say $m=2l$. We set 
\[q_l=diag(\w_F^l,\dots,\w_F^l,1)\in A_n(F).\]
As the essential vector 
$W_{\widetilde{\pi}}^0$ is right $G_{n-1}(\o_E)$ invariant, we deduce that 
\[\widetilde{\pi}(p_m)W_{\widetilde{\pi}}^0=\widetilde{\pi}(q_l)W_{\widetilde{\pi}}^0.\]
Since $q_l \in A_n(F)$, we can do a change of variable exactly as in (\ref{eq1}), and the rest of the proof in this case then follows verbatim the proof in the unramified case.
\end{proof}

\begin{remark}\label{rmk3}
As mentioned in Remark \ref{rmk2}, proportionality constants between two specific linear forms do arise naturally in several different situations. It seems most natural to determine the value of such a constant by evaluating the forms on a suitable test vector as we do in the proof of Theorem \ref{constant}. We believe that the computations that we do here will have applications in similar other contexts as well.  
\end{remark}

\section{The non-unitary generic case}\label{sec-nonunitary}

It has been quite standard to study distinction for the pair $(GL(n,E),GL(n,F))$ under the assumption that the irreducible admissible representation of $GL_n(E)$ under consideration is unitarizable. This is because the distinguishing linear form for this pair, up to a scalar multiple, is known to be given by integration on the Kirillov model, and the integral converges only under the unitarity hypothesis \cite[Lemma 4 and Proposition 4 (ii)]{fli88}.  

The point of this section is to extend all of our results till now in this paper for irreducible unitary generic representations to irreducible generic representations which may be non-unitary. The key to making this extension is the following simple observation which applies to $G_n(F)$-distinguished representations of $G_n(E)$.

\begin{proposition}\label{regular}
Let $\pi$ be an irreducible generic representation of $G_n(E)$ which is conjugate self-dual; i.e., $\widetilde{\pi} \simeq \pi^\sigma$. Then, $L(s,\pi,As)$ is holomorphic at $s=1$.
\end{proposition}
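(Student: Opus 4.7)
The plan is to reduce the claim to a combinatorial statement about Zelevinsky parameters, using the multiplicativity of the Asai $L$-function. Since $\pi$ is irreducible and generic, we can write its Zelevinsky decomposition $\pi \simeq \delta_1 \times \dots \times \delta_t$ with pairwise unlinked essentially square integrable $\delta_i$. Under any ordering of the factors this is a standard module (because pairwise unlinkedness gives $\delta_i \not\prec \delta_j$ for all $i\neq j$), so Theorem~\ref{multiplicativity-standard} applies and yields
\[
L(s,\pi,As) \;=\; \prod_{i=1}^{t} L(s,\delta_i,As)\cdot \prod_{j<k} L(s,\delta_j\times \delta_k^\sigma).
\]
It therefore suffices to show that neither kind of factor has a pole at $s=1$.

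Next I would translate the conjugate self-duality hypothesis into a combinatorial statement. The Zelevinsky decomposition of $\widetilde{\pi}$ is $\widetilde{\delta_1}\times\dots\times\widetilde{\delta_t}$ and that of $\pi^\sigma$ is $\delta_1^\sigma\times\dots\times\delta_t^\sigma$, the factors remaining essentially square integrable and pairwise unlinked in each case, since duality and Galois conjugation both preserve the preceding relation. By uniqueness of the Zelevinsky data up to reordering, $\widetilde{\pi}\simeq \pi^\sigma$ gives the multiset identity $\{\widetilde{\delta_i}\}_i=\{\delta_l^\sigma\}_l$, or equivalently (applying $\sigma$) $\{\widetilde{\delta_i}^\sigma\}_i=\{\delta_l\}_l$.

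Finally I would handle the two types of factors in turn. Suppose $L(s,\delta_i,As)$ has a pole at $s=1$. By Lemma~\ref{poleB} this forces $\delta_i\prec \widetilde{\delta_i}^\sigma$, and the multiset identity gives some $l$ with $\widetilde{\delta_i}^\sigma\simeq \delta_l$, whence $\delta_i\prec \delta_l$. If $i\neq l$ this contradicts pairwise unlinkedness of the $\delta$'s, and if $i=l$ it contradicts the fact that $\delta\prec \delta$ is forbidden by definition (the defining shift $\rho'=\nu_E^{\,l'}\rho$ forces $l'\geq 1$, so $l'=0$ is excluded). Similarly, suppose $L(s,\delta_j\times \delta_k^\sigma)$ has a pole at $s=1$ for some $j<k$; by Lemma~\ref{poleA} this gives $\delta_j\prec \widetilde{\delta_k^\sigma}=\widetilde{\delta_k}^\sigma$, and again $\widetilde{\delta_k}^\sigma\simeq \delta_l$ for some $l$, leading to $\delta_j\prec \delta_l$ and the same contradiction. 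The main obstacle is really just the combinatorial bookkeeping ensuring that duality and the Galois involution interact correctly with the Zelevinsky decomposition and with the preceding relation; once this is in place, the argument uses only (i) multiplicativity of the Asai $L$-function, (ii) the pole criteria of Lemmas~\ref{poleA} and~\ref{poleB}, and (iii) uniqueness of Zelevinsky data.
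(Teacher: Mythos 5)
Your proof is correct and follows essentially the same route as the paper: write the Zelevinsky decomposition, apply multiplicativity of the Asai $L$-function, and use Lemmas~\ref{poleA} and~\ref{poleB} together with the multiset identity coming from $\widetilde{\pi}\simeq\pi^\sigma$ to contradict unlinkedness. The only (welcome) addition is that you explicitly rule out the degenerate case $i=l$ via $\delta\not\prec\delta$, a point the paper leaves implicit.
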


\begin{proof}
Write $\pi$ as a commutative product of unlinked segments $\d_1\times \dots \times \d_t$. According to Theorem \ref{multiplicativity-standard}, if $L(s,\pi,As)$ has a pole at $s=1$, then either $L(s,\d_i,As)$ has a pole at $s=1$ for some $i$, or $L(s,\d_j \times \d_k^\sigma)$ has a pole at $s=1$ 
for some $(j,k)$. 

We are going to show  that when $\pi$ is conjugate self-dual, the above observation would imply that some segments among the $\d_k$'s are linked, which will then contradict our assumption that $\pi$ is an irreducible generic representation. 

First suppose that $L(s,\d_i,As)$ has a pole at $s=1$. Then by Lemma \ref{poleB}, one has $\d_i\prec\widetilde{\d_i}^{\sigma}$, but by our assumption that $\pi$ is conjugate self-dual, we know that $\widetilde{\d_i}^{\sigma}$ is a $\d_{i^\prime}$, which contradicts the fact that the $\d_i$'s are unlinked. 

We obtain exactly the same contradiction if $L(s,\d_j \times \d_k^\sigma)$ has a pole at $s=1$, appealing to Lemma \ref{poleA} in this case, instead of Lemma \ref{poleB}.
\end{proof}

Once again let $\psi$ be a non-trivial character of $E$ that is trivial on $F$. Recall that by Lemma \ref{inclusion}, all the integrals 
\[I_{(0)}(s,W)=\int_{N_n(F)\backslash P_n(F)} W(h)\nu_F(h)^{s-1} dh\]
belong to the fractional ideal $I(\pi)$ of $\C[q_F^{\pm s}]$, and in fact they are of the form $I(s,W,\Phi)$ for suitably chosen Schwartz-Bruhat functions $\Phi$.
And we also have just observed in Proposition \ref{regular} that $L(s,\pi,As)$ is holomorphic at $s=1$ since $\pi$ is assumed to be distinguished with respect to $G_n(F)$. Therefore, as a consequence it follows that all the integrals $I_{(0)}(s,W)$ are holomorphic at $s=1$ when $\pi$ is distinguished. 

Hence we can still define the $P_n(F)$-invariant linear form 
\[\ell(W)=I_{(0)}(1,W)\]
on $W(\pi,\psi)$. By Proposition \ref{P-invariance}, it is also $G_n(F)$-invariant.
Similarly, we can also define another $G_n(F)$-invariant linear form 
\[\ell^\prime(W)=I_{(0)}(1,\widetilde{W}).\]

If $\pi$ is ramified, exactly as in Theorem \ref{ramified-computation}, one shows that if $\psi$ has conductor zero, then 
\[I_{(0)}(s,W_\pi^0)=L(s,\pi_u, As)\] for $\mathrm{Re}(s)$ large enough, and hence for all $s \in \mathbb C$. If $\pi$ is unramified, the proof of Theorem \ref{unramified-computation} shows that
\[I_{(0)}(s,W_\pi^0)=\frac{L(s,\pi, As)}{L(ns,1_{F^\times})},\] 
for $s \in \mathbb C$.

Evaluating at $s=1$, we get Theorems \ref{ramified-computation} and \ref{unramified-computation} for any irreducible generic representation of $G_n(E)$ that is distinguished with respect to $G_n(F)$. The proof of Theorem \ref{constant} also goes through without any further modification, except the usual and standard arguments of analytic continuation of invariant linear forms. 

Thus, we obtain the following theorem.

\begin{theorem}\label{general}
If $\pi$ is an irreducible  generic representation of $G_n(E)$ which is distinguished with respect to $G_n(F)$, then:
\begin{enumerate}
\item If $\psi$ has conductor zero, then $\ell(W_\pi^0)=L^*(1,\pi,As)\neq 0$.
\item Moreover, $\ell^\prime =\ell$.
\end{enumerate}
\end{theorem}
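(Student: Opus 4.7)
The plan is to build the statement on top of three earlier pieces: the analytic fact that $L(s,\pi,As)$ is holomorphic at $s=1$ (Proposition \ref{regular}), the identification of $I_{(0)}(s,W)$ as an Asai integral (Lemma \ref{inclusion}), and the explicit computations already carried out for $W_\pi^0$ in the unitary case. The first paragraph of the proof will only need to verify that the linear forms $\ell$ and $\ell'$ still make sense; the second will re-run the earlier arguments verbatim, treating $s$ as a variable rather than plugging $s=1$ right away.

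First I would observe that since $\pi$ is distinguished, Proposition \ref{basic} gives $\widetilde\pi\simeq\pi^\sigma$, so by Proposition \ref{regular} the Asai $L$-function $L(s,\pi,As)$ is holomorphic at $s=1$. By Lemma \ref{inclusion}, every integral $I_{(0)}(s,W)$ lies in the fractional ideal $I(\pi)$ generated by $L(s,\pi,As)$, hence $I_{(0)}(s,W)$ is holomorphic at $s=1$ for every $W\in W(\pi,\psi)$. Thus the linear form $\ell(W)=I_{(0)}(1,W)$ is well-defined on $W(\pi,\psi)$, and similarly for $\ell'(W)=I_{(0)}(1,\widetilde W)$. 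The form $\ell$ is manifestly $P_n(F)$-invariant (invariance of the Haar measure on $N_n(F)\backslash P_n(F)$), and Proposition \ref{P-invariance} upgrades this to full $G_n(F)$-invariance; the same applies to $\ell'$ via $\widetilde\pi$, which is distinguished because $\widetilde\pi\simeq\pi^\sigma$ and the distinction condition is Galois-invariant.

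For part (1), I would choose $\psi$ of conductor zero (possible by Lemma \ref{level0}) and repeat the calculation of Theorem \ref{ramified-computation} with $s$ kept as a parameter: the chain of equalities there used only the Iwasawa decomposition, Theorem \ref{essentialformula}, and Flicker's spherical formula, none of which require unitarity. This gives
\[
I_{(0)}(s,W_\pi^0)=L(s,\pi_u,As)
\]
in the ramified case, and the analogous computation of Theorem \ref{unramified-computation} yields
\[
I_{(0)}(s,W_\pi^0)=\frac{L(s,\pi,As)}{L(ns,1_{F^\times})}
\]
in the unramified case. These identities hold for $\Re(s)$ large by absolute convergence and then everywhere by analytic continuation; evaluating at $s=1$ gives $\ell(W_\pi^0)=L^*(1,\pi,As)$, which is non-zero since $L$-factors do not vanish.

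For part (2), I would mimic the proof of Theorem \ref{constant}. By Proposition \ref{dual-of-essential} and \cite[Theorem 3.6]{mo16}, which forces $\epsilon(1/2,\pi,\psi)=1$, one has $\widetilde{W_\pi^0}=\widetilde\pi(p_m)W_{\widetilde\pi}^0$ up to the trivial scalar. When $E/F$ is unramified, $p_m\in G_n(F)$ and a change of variable yields $\ell'(W_\pi^0)=\ell(W_{\widetilde\pi}^0)$; in the ramified case, Lemma \ref{evenconductor} says $m=2l$ is even, so $\widetilde\pi(p_m)W_{\widetilde\pi}^0=\widetilde\pi(q_l)W_{\widetilde\pi}^0$ with $q_l\in A_n(F)$, and the same change of variable applies. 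Combining with the already-established part (1) and Corollary \ref{obvious} (which gives $(\widetilde\pi)_u\simeq\pi_u$) yields $\ell'(W_\pi^0)=\ell(W_\pi^0)\neq 0$, so the proportionality constant $c(\pi)$ of Proposition \ref{constant-pi}, which depends only on $\pi$, must equal $1$. The main potential obstacle is making sure Proposition \ref{constant-pi} itself is available in the non-unitary setting — but its derivation only uses multiplicity one for $G_n(F)$-invariant forms and the observation that $\ell$ and $\ell'\circ$(appropriate involution) are both non-zero $G_n(F)$-invariant functionals, which is exactly what the first paragraph has secured.
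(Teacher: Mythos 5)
Your proposal follows essentially the same route as the paper: use Proposition \ref{basic} and Proposition \ref{regular} to secure holomorphy of $L(s,\pi,As)$ at $s=1$, combine with Lemma \ref{inclusion} to define $\ell$ and $\ell'$ via analytic continuation, upgrade $P_n(F)$-invariance to $G_n(F)$-invariance by Proposition \ref{P-invariance}, re-run the computations of Theorems \ref{ramified-computation} and \ref{unramified-computation} treating $s$ as a variable to get part (1), and then repeat the proof of Theorem \ref{constant} for part (2) once Proposition \ref{constant-pi} is seen to persist in the non-unitary setting. This is exactly the argument the paper gives, with your explicit remark about re-establishing Proposition \ref{constant-pi} being a helpful elaboration of what the paper compresses into ``the usual and standard arguments of analytic continuation of invariant linear forms.''
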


\begin{remark}\label{rmk-dp}
There is another striking way of phrasing Theorem \ref{general} (2), which we do now, and we thank Dipendra Prasad for suggesting this remark. Let $\pi$ be an irreducible generic representation of $G_n(E)$ which is distinguished with respect to $G_n(F)$. By Proposition \ref{basic}, we know that 
\[\pi \cong \pi^{\vee \sigma} \cong \pi^\iota,\]
where $\iota:G_n(E) \rightarrow G_n(E)$ is the involution given by $\iota(g)=w ~{^t}g^{-\sigma}~ w^{-1}$. Let $\psi$ be a non-trivial additive character of $E/F$, and let $\lambda: \pi \rightarrow \mathbb C$ be a $\psi$-Whittaker functional (which is unique up to multiplication by scalars). Let $T_i: \pi \rightarrow \pi$ be the \textit{unique} linear map such that
\begin{equation}\label{eq-1}
\lambda \circ T_\iota = \lambda, 
\end{equation}
with
\begin{equation}\label{eq-2}
T_i(g \cdot v) = \iota(g) T_i(v). 
\end{equation}
Indeed, from (\ref{eq-2}), we may assume $T_i^2=1$, and since the involution $\iota$ preserves $N(E)$ and $\psi^\iota = \psi^{-\sigma} = \psi$, it follows that $\lambda \circ T_\iota$ is also a $\psi$-Whittaker functional, thus equals $\pm \lambda$, and therefore (\ref{eq-1}) gives a \textit{canonical} choice of $T_i$. We extend the representation $\pi$ of $G_n(E)$ to the semi-direct product $G_n(E) \rtimes \mathbb Z/2$, where $\mathbb Z/2$ acts via $\iota$, by prescribing 
\begin{equation}\label{eq-3}
(g,\iota) \cdot v = g \cdot T_\iota(v). 
\end{equation}
Now, observe that the linear map
\[W \mapsto \pi(w)\widetilde{W}^\sigma, \]
defined from the Whittaker model $W(\pi,\psi)$ to itself, satisfies both (\ref{eq-1}) and (\ref{eq-2}), and so by the unicity of $T_i$, this map is indeed $T_i$. Therefore, Theorem \ref{general} (2) is equivalent to the identity: 
\begin{equation}\label{eq-4}
\ell \circ T_\iota = \ell.
\end{equation}
From (\ref{eq-3}) and (\ref{eq-4}), we conclude that the $G_n(F)$-invariant linear form $\ell$ is in fact invariant under the larger group $G_n(F) \rtimes \mathbb Z/2$.
\end{remark}

\section*{Acknowledgements}

The authors would like to thank Dipendra Prasad for asking the question about an explicit test vector for the invariant linear form for $(G_n(E),G_n(F))$. They would also like to thank Omer Offen for useful conversations on the theme of this paper. This paper began when the authors were visiting CIRM Luminy as part of the Chaire Jean-Morlet 2016 Programme and they thank Dipendra Prasad and Volker Heiermann for the invitation to participate in the programme. Thanks are due to the anonymous referee for a careful reading of the manuscript and several useful suggestions. The second named author also thanks the grant ANR-13-BS01-0012
FERPLAY for financial support.


\begin{thebibliography}{JPSS83}

\bibitem[AKT04]{akt04}
U.~K. Anandavardhanan, Anthony~C. Kable, and R.~Tandon, \emph{Distinguished
  representations and poles of twisted tensor {$L$}-functions}, Proc. Amer.
  Math. Soc. \textbf{132} (2004), no.~10, 2875--2883. \MR{2063106}

\bibitem[Ana08]{ana08}
U.~K. Anandavardhanan, \emph{Root numbers of {A}sai {$L$}-functions}, Int.
  Math. Res. Not. IMRN (2008), Art. ID rnn125, 25. \MR{2448081}

\bibitem[AR05]{ar05}
U.~K. Anandavardhanan and C.~S. Rajan, \emph{Distinguished representations,
  base change, and reducibility for unitary groups}, Int. Math. Res. Not.
  (2005), no.~14, 841--854. \MR{2146859}

\bibitem[BZ76]{bz76}
I.~N. Bern{\v{s}}te{\u\i}n and A.~V. Zelevinski{\u\i}, \emph{Representations of
  the group {$GL(n,F),$} where {$F$} is a local non-{A}rchimedean field},
  Uspehi Mat. Nauk \textbf{31} (1976), no.~3(189), 5--70. \MR{0425030}

\bibitem[Fli88]{fli88}
Yuval~Z. Flicker, \emph{Twisted tensors and {E}uler products}, Bull. Soc. Math.
  France \textbf{116} (1988), no.~3, 295--313. \MR{984899}

\bibitem[Fli91]{fli91}
\bysame, \emph{On distinguished representations}, J. Reine Angew. Math.
  \textbf{418} (1991), 139--172. \MR{1111204}

\bibitem[Fli93]{fli93}
\bysame, \emph{On zeroes of the twisted tensor {$L$}-function}, Math. Ann.
  \textbf{297} (1993), no.~2, 199--219. \MR{1241802}

\bibitem[FQ73]{fq73}
A.~Fr{\"o}hlich and J.~Queyrut, \emph{On the functional equation of the {A}rtin
  {$L$}-function for characters of real representations}, Invent. Math.
  \textbf{20} (1973), 125--138. \MR{0321888}

\bibitem[GJ72]{gj72}
Roger Godement and Herv{\'e} Jacquet, \emph{Zeta functions of simple algebras},
  Lecture Notes in Mathematics, Vol. 260, Springer-Verlag, Berlin-New York,
  1972. \MR{0342495}

\bibitem[GK75]{gk75}
I.~M. Gelfand and D.~A. Kajdan, \emph{Representations of the group
  {${\rm GL}(n,K)$} where {$K$} 
  is a local field}, Lie groups and their
  representations ({P}roc. {S}ummer {S}chool, {B}olyai {J}\'anos {M}ath.
  {S}oc., {B}udapest, 1971), Halsted, New York, 1975, pp.~95--118. \MR{0404534}

\bibitem[Gur15]{gur15}
Maxim Gurevich, \emph{On a local conjecture of {J}acquet, ladder
  representations and standard modules}, Math. Z. \textbf{281} (2015), no.~3-4,
  1111--1127. \MR{3421655}

\bibitem[Hak91]{hak91}
Jeff Hakim, \emph{Distinguished {$p$}-adic representations}, Duke Math. J.
  \textbf{62} (1991), no.~1, 1--22. \MR{1104321}

\bibitem[HO15]{ho15}
Jeffrey Hakim and Omer Offen, \emph{Distinguished representations of {${\rm
  GL}(n)$} and local converse theorems}, Manuscripta Math. \textbf{148} (2015),
  no.~1-2, 1--27. \MR{3377749}

\bibitem[JPSS81]{jps81}
H.~Jacquet, I.~I. Piatetski-Shapiro, and J.~Shalika, \emph{Conducteur des
  repr\'esentations du groupe lin\'eaire}, Math. Ann. \textbf{256} (1981),
  no.~2, 199--214. \MR{620708}

\bibitem[JPSS83]{jps83}
H.~Jacquet, I.~I. Piatetskii-Shapiro, and J.~A. Shalika, \emph{Rankin-{S}elberg
  convolutions}, Amer. J. Math. \textbf{105} (1983), no.~2, 367--464.
  \MR{701565}

\bibitem[JS83]{js83}
Herv{\'e} Jacquet and Joseph Shalika, \emph{The {W}hittaker models of induced
  representations}, Pacific J. Math. \textbf{109} (1983), no.~1, 107--120.
  \MR{716292}

\bibitem[Kab04]{kab04}
Anthony~C. Kable, \emph{Asai {$L$}-functions and {J}acquet's conjecture}, Amer.
  J. Math. \textbf{126} (2004), no.~4, 789--820. \MR{2075482}

\bibitem[LM14]{lm14}
Erez Lapid and Zhengyu Mao, \emph{On a new functional equation for local
  integrals}, Automorphic forms and related geometry: assessing the legacy of
  {I}. {I}. {P}iatetski-{S}hapiro, Contemp. Math., vol. 614, Amer. Math. Soc.,
  Providence, RI, 2014, pp.~261--294. \MR{3220931}

\bibitem[LM15a]{lm15}
\bysame, \emph{A conjecture on {W}hittaker-{F}ourier coefficients of cusp
  forms}, J. Number Theory \textbf{146} (2015), 448--505. \MR{3267120}

\bibitem[LM15b]{lm15b}
\bysame, \emph{Model transition for representations of metaplectic type}, Int.
  Math. Res. Not. IMRN (2015), no.~19, 9486--9568, With an appendix by Marko
  Tadi{\'c}. \MR{3431601}

\bibitem[Mat09]{mat09b}
Nadir Matringe, \emph{Conjectures about distinction and local {A}sai
  {$L$}-functions}, Int. Math. Res. Not. IMRN (2009), no.~9, 1699--1741.
  \MR{2500974}

\bibitem[Mat10]{mat10}
\bysame, \emph{Distinguished representations and exceptional poles of the
  {A}sai-{$L$}-function}, Manuscripta Math. \textbf{131} (2010), no.~3-4,
  415--426. \MR{2592088}

\bibitem[Mat11]{mat09a}
\bysame, \emph{Distinguished generic representations of {${\rm GL}(n)$} over
  {$p$}-adic fields}, Int. Math. Res. Not. IMRN (2011), no.~1, 74--95.
  \MR{2755483}

\bibitem[Mat13]{mat13}
\bysame, \emph{Essential {W}hittaker functions for {$GL(n)$}}, Doc. Math.
  \textbf{18} (2013), 1191--1214. \MR{3138844}

\bibitem[Miy14]{miy14}
Michitaka Miyauchi, \emph{Whittaker functions associated to newforms for
  {$GL(n)$} over {$p$}-adic fields}, J. Math. Soc. Japan \textbf{66} (2014),
  no.~1, 17--24. \MR{3161390}

\bibitem[MO16]{mo16}
Nadir Matringe and Omer Offen, \emph{Distinction and gamma factors},
  arXiv:1607.01982 (2016).

\bibitem[Mor16]{mor16}
Kazuki Morimoto, \emph{Model transition for representations of unitary type},
  preprint (2016).
  

\bibitem[Off11]{off11}
Omer Offen, \emph{On local root numbers and distinction}, J. Reine Angew. Math.
  \textbf{652} (2011), 165--205. \MR{2787356}

\bibitem[Ok97]{ok97}
Youngbin Ok, \emph{Distinction and gamma factors at 1/2: {S}upercuspidal case},
  ProQuest LLC, Ann Arbor, MI, 1997, Thesis (Ph.D.)--Columbia University.
  \MR{2716711}

\bibitem[Shi76]{shi76}
Takuro Shintani, \emph{On an explicit formula for class-{$1$} ``{W}hittaker
  functions'' on {$GL_{n}$} over {$P$}-adic fields}, Proc. Japan Acad.
  \textbf{52} (1976), no.~4, 180--182. \MR{0407208}

\bibitem[Tat79]{tat77}
J.~Tate, \emph{Number theoretic background}, Automorphic forms, representations
  and {$L$}-functions ({P}roc. {S}ympos. {P}ure {M}ath., {O}regon {S}tate
  {U}niv., {C}orvallis, {O}re., 1977), {P}art 2, Proc. Sympos. Pure Math.,
  XXXIII, Amer. Math. Soc., Providence, R.I., 1979, pp.~3--26. \MR{546607}

\bibitem[Ven06]{ven06}
Akshay Venkatesh, \emph{Large sieve inequalities for {${\rm GL}(n)$}-forms in
  the conductor aspect}, Adv. Math. \textbf{200} (2006), no.~2, 336--356.
  \MR{2200849}

\bibitem[Zel80]{zel80}
A.~V. Zelevinsky, \emph{Induced representations of reductive $p$-adic groups.
  {II}. {O}n irreducible representations of {${\rm GL}(n)$}}, Ann. Sci. \'Ecole
  Norm. Sup. (4) \textbf{13} (1980), no.~2, 165--210. \MR{584084}

\end{thebibliography}
\end{document}